\numberwithin{equation}{section}
\newtheorem{thm}{Theorem}[section]
\newtheorem{lem}[thm]{Lemma}
\newtheorem{defin}[thm]{Definition}
\newtheorem{remark}[thm]{Remark}
\begin{document}

\begin{center}
\textbf{{\large {\ INVERSE PROBLEM FOR THE SUBDIFFUSION EQUATION WITH FRACTIONAL CAPUTO DERIVATIVE }}}\\[0pt]
\medskip \textbf{Ravshan Ashurov$^{1}$ and Marjona Shakarova$^{2}$}\\[0pt]
\textit{ashurovr@gmail.com\\[0pt]}
\medskip \textit{\ $^{1}$ Institute of Mathematics, Academy of Science of Uzbekistan}

\medskip \textit{$^{2}$ AU Engineering School, Akfa University, 264, Milliy Bog Str., Tashkent 111221, Uzbekistan}

\textit{shakarova2104@gmail.com\\[0pt]}

\medskip \textit{\ $^{1}$ Institute of Mathematics, Academy of Science of Uzbekistan}

\end{center}

\textbf{Abstract}: The inverse problem of determining the right-hand side of the subdiffusion equation with the fractional Caputo derivative is considered. The right-hand side of the equation has the form $f(x)g(t)$ and the unknown is function $f(x)$. The condition $ u (x,t_0)= \psi (x) $ is taken as the over-determination condition, where $t_0$ is some interior point of the considering domain and $\psi (x) $ is a given function. It is proved by the Fourier method that under certain conditions on the functions $g(t)$ and $\psi (x) $ the solution of the inverse problem exists and is unique. An example is given showing the violation of the uniqueness of the solution of the inverse problem for some sign-changing functions $g(t)$. It is shown that for the existence of a solution to the inverse problem for such functions $g(t)$, certain orthogonality conditions for the given functions and some eigenfunctions of the elliptic part of the equation must be satisfied.

\vskip 0.3cm \noindent {\it AMS 2000 Mathematics Subject
Classifications} :
Primary 35R11; Secondary 34A12.\\
{\it Key words}:  Subdiffusion equation, forward and inverse problems, the Caputo deri\-vatives, Fourier method.

\section{Introduction}

Let $\rho\in(0,1] $ be a fixed number.
Consider the following initial-boundary value problem
\begin{equation}\label{prob1}
\left\{
\begin{aligned}
& D_t^\rho u(x,t)-\Delta u(x,t)=F(x,t)\equiv f(x)g(t),\quad x\in \Omega, \quad t\in (0,T],\\
&u{(x,t)}|_{\partial\Omega}=0, \\
&u(x,0)=\varphi(x), \quad x\in \Omega.
\end{aligned}
\right.
\end{equation}
Here $f(x)$, $g(t)$ and $\varphi(x)$ are continuous functions in the domain $\Omega\subset \mathbb{R}^N$ and $
D_t^\rho h(t)$ stands for the Caputo fractional derivative (see e.g. \cite{Pskhu}, p.14)
\[
D_t^\rho h(t)=  \int\limits_0^t \omega_{1-\rho}(t-s) \frac{d}{ds}h(s)ds, \quad \omega_{\rho}(t)=\frac{t^{\rho-1}}{\Gamma(\rho)},
\]
where $\Gamma(\rho)$ is the gamma function. If we first integrate and then differentiate, then we get the Riemann-Liouville derivative.

It should be noted that if $\rho=1$, then both the Caputo derivative and the Riemann-Liouville derivative coincide with the classical first-order derivative. Therefore, if $\rho=1$, then problem (\ref{prob1}) coincides with the usual initial-boundary value problem for the diffusion equation.

Problem (\ref{prob1}) is also called the \textit{forward problem}.
The main purpose of this study is the inverse problem of determining the right-hand side of the equation, namely function $f(x)$. To solve the inverse problem one needs an extra condition. Following A.I. Prilepko and A.B. Kostin \cite{Pr} and K.B. Sabitov \cite{Sab} (see also \cite{Sab2}) we consider the additional condition in the form:
\begin{equation}\label{ad}
u (x,t_0) = \psi (x) , \quad x\in \Omega,\\
\end{equation}
where $t_0$ is a given fixed point of the segment $(0, T]$.

Let us call the initial-boundary value problem (\ref{prob1}) together with the additional condition (\ref{ad}) \emph{ the inverse problem} of finding the part $f(x)$ of the right-hand side of the equation.

The authors usually set an additional condition (\ref{ad}) at the final time $t_0=T$ (see, e.g. \cite{Orl}, \cite{Tix} for classical diffusion equations and for subdiffusion equations see \cite{MS}, \cite{MS1}). The meaning of taking condition (\ref{ad}) at $t_0$ is that in some cases the uniqueness of the solution of the inverse problem is violated if $t_0=T$ and by choosing $t_0$ it is possible to achieve uniqueness in these cases as well.

We will be interested in{\it the classical solution} (we will simply call it a solution) of the problems under consideration, i.e. such solutions that themselves and all the derivatives involved in the equation are continuous, moreover, all the given functions are continuous and the equation is performed at each point. As an example, let us give the definition of the solution to the inverse problem.
 \begin{defin}\label{def} A pair of functions $\{u(x,t), f(x)\}$ with the properties
\begin{enumerate}
	\item
	$D_t^\rho u(x,t), \Delta u(x,t)\in C(\overline{\Omega} \times (0.T])$,
	\item$u(x,t)\in C(\overline{\Omega}\times [0.T])$,
	\item
$f(x)\in C(\overline{\Omega})$,
\end{enumerate}
and satisfying conditions
(\ref{prob1}), (\ref{ad})  is called \textbf{the
 solution} of the inverse problem.
\end{defin}

We note that in this definition the requirement of continuity in a closed domain of all derivatives of the solution appearing in (\ref{prob1}) was proposed by O.A. Ladyzhenskaya \cite{{Lad}}. The advantage of this choice is that the uniqueness of just such a solution is proved quite simply, moreover, the solution found by the Fourier method satisfies the above conditions.

Inverse problems of determining the right-hand side of various subdiffusion equations have been studied by a number of authors due to the importance of such problems for applications. However, it should be immediately noted that for the abstract case of the source function $F(x,t)$ there is no general theory yet (see the survey paper \cite{{Hand1}} and the literature therein). In all known works, the split source function $F(x,t)\equiv f(x)g(t)$ is considered and the methods of investigation depend on whether $f(x)$ or $g(t)$ is unknown.
It is somewhat more difficult to study the case when function $g(t)$ is unknown. For example, in the papers \cite{Ash1} and \cite{Ash2} the questions of finding the non-stationary source function $g(t)$ are studied. It should be noted that in these papers the over-determination condition is taken in a fairly general form: $B[u(\cdot, t)] = \psi(t)$, where $B$ is a linear continuous functional. In particular, one can take $u(x_0,t)$ or $\int_{\Omega}u(x,t) dx$ as $B[u(\cdot, t)]$. Finding the unknown function $g(t)$ for subdiffusion equations is studied in the articles \cite{Hand1} and \cite{Yama11}.

For subdiffusion and diffusion equations, the case $g(t)\equiv 1$ and the unknown is $f(x)$ has been studied by many authors (see, for example, \cite{Fur}-\cite{4}). We will mention only some of these articles.

Subdiffusion equations with an elliptic part as an ordinary differential expression are considered in the articles \cite{Fur},\cite{15}, \cite{16}. Authors of articles  \cite{20}, \cite{24} studied subdiffusion equations whose elliptic part is the Laplace operator or a second-order differential operator. The paper \cite{25} is devoted to study the inverse problem for a subdiffusion equation with the Caputo fractional derivative and an arbitrary elliptic selfadjoint differential operator. The authors of this paper proved the uniqueness and existance of a generalized solution.  The case of the Riemann-Liouville derivative considered in \cite{4}. Here the uniqueness and existance of a classical solution is proved. In the papers \cite{Fur} and \cite{24}, the fractional derivative in the subdiffusion equation is a two-parameter generalized Hilfer fractional derivative.

In \cite{AshF}, the authors considered the inverse problem of simultaneous determination of the order of the Riemann-Liouville fractional derivative and the source function in subdiffusion equations. Using the classical Fourier method, the authors proved that the solution to this inverse problem exists and is unique.

In the monograph by K. B. Sabitov \cite{Sab3} the solvability of forward and inverse problems for equations of mixed parabolic-hyperbolic type was studied.

We note some results obtained for the case $g(t)\not\equiv 1$.
For classical diffusion equations, such an inverse problem has been studied in detail (see the well-known monograph by S. Kabanikhin (\cite{Kab1}, Chapter 8,  see also \cite{Pr}, \cite{Sab}, \cite{Sab2}, \cite{Orl}, \cite{Tix}). Since the equation considered by us also covers the diffusion equation, we will dwell on these works in more detail at the end of  Section 4.

In the paper \cite{FN} the problem of finding the function $f(x)$ for an abstract subdiffusion equation with the Caputo derivative is studied.
To find function $f(x)$, the authors used the following additional condition $\int_0^T u(t)d\mu(t)=u_T$.

M. Slodichka et al. \cite{MS} and \cite{MS1} studied the uniqueness of a solution of the inverse problem for a subdiffusion equation, the elliptic part of which depends on time. It is proved that if function g(t) does not change sign, then the solution of the inverse problem is unique. It should be especially noted that in \cite{MS1} the authors constructed an example of a function $g(t)$ that changes sign in the domain under consideration, as a result of which the uniqueness of the solution of the inverse problem is lost.

It is well known that the considering inverse problem is ill-posed, i.e., the solution does
not depend continuously on the given data. Therefore, in the works of some authors, various regularization methods are proposed for constructing an approximate solution of the inverse problem (see, e.g., \cite{S}, \cite{Niu}). Thus, in paper \cite{S} the inverse problem for the fractional diffusion equation with the Riemann-Liouville derivative is considered. Assuming that solutions to the equation can be represented by a Fourier series, the authors applied the Tikhonov regularization method to find an approximate solution. Convergence estimates for exact and regularized solutions are presented for a priori and a posteriori rules for choosing parameters. In \cite{Niu}, similar questions are investigated for the stochastic fractional diffusion equation.

This work is devoted to the study of the forward problem (\ref{prob1}) and the inverse problem (\ref{prob1}), (\ref{ad}) of determining the right-hand side of the equation.
Let us list the main results of this paper.

1) First (in Section 3), we prove the existence and uniqueness theorem for the forward problem (\ref{prob1}) using the Fourier method. We present conditions on the initial function $\varphi(x)$ and on the right-hand side of the equation that ensure the validity of the application of the Fourier method. Due to the fact that the elliptic part of the equation is the Laplace operator, the conditions on the functions $f(x)$ and $g(t)$ turned out to be easier to check than in the case of a general elliptic operator (see, \cite{AshM});

2) Then (in Section 4), under a certain condition on function $g(t)$ (for example, the constant sign is sufficient), we prove the existence and uniqueness of a solution to the inverse problem. Further, we will show that if this condition is violated, then for the existence of a solution to the inverse problem, it is sufficient that the functions from the initial condition and the over-determination condition be orthogonal to some eigenfunctions of the Laplace operator with the Dirichlet condition;

3) An example of function $g(t)$ is constructed (in Section 4), for which the condition noted above is not satisfied and, as a result, the inverse problem has more than one solution.

The following Section 2 is auxiliary and contains definitions and well-known assertions necessary for further presentation. The section Conclusions completes this work.

\section{Preliminaries}

In this section, which has an auxiliary character, we define fractional powers of a self-adjoint extension of the Laplace operator, formulate a lemma, from the book of Krasnoselskii et al. \cite{K}, the fundamental result of V.A. Il'in \cite{Il} about the convergence of the Fourier coefficients and indicate some properties of the Mittag-Leffler function that we need.

Denote by $\{v_k(x)\}$ the complete system of orthonormal eigenfunctions in $L_2(\Omega)$ and by $\{\lambda_k\}$ the set of positive eigenvalues of the following spectral problem
\[
\left\{
\begin{aligned}
& -\Delta v(x)=\lambda v(x),\quad x\in \Omega,\\
& v(x)|_{\partial\Omega}=0. \\
\end{aligned}
\right.
\]

Let $\sigma$ be an arbitrary real number. Consider an operator $\hat{A}^\sigma$ acting in $L_2(\Omega)$ as:
\[
\hat{A}^{\sigma}g(x)=\sum\limits_{k=1}^\infty \lambda_k^{\sigma} g_k v_k(x), \quad g_k=(g,v_k),
\]
with the domain of definition
\[
D(\hat{A}^{\sigma}) = \{g \in L_2(\Omega) :\sum\limits_{k=1}^\infty \lambda_k^{2\sigma} |g_k|^2<\infty  \}.
\]
For elements of $D(\hat{A}^{\sigma})$ we introduce the norm
\[
||g||^2_\sigma =\sum\limits_{k=1}^\infty \lambda_k^{2\sigma} |g_k|^2=||\hat{A}^{\sigma}g||^2.
\]
If $A$ stands for the operator acting in $L_2(\Omega)$ as $Ag(x) = -\Delta g(x)$ with the domain of definition $D(A) = \{g \in C^2(\Omega) : g(x) = 0, x \in \partial\Omega \}$, then $\hat{A}\equiv\hat{A}^1$  is a self-adjoint extension of $A$ in $L_2(\Omega)$.

In our reasoning the following lemma from the book Krasnoselskii et al. \cite{K} plays an important role.
\begin{lem}\label{ml1} Let $\sigma>\frac{N}{4}$. Then operator $\hat{A}^{-\sigma}$ continuously maps the space $L_2(\Omega)$ into $C(\Omega)$, and moreover, the following estimate holds
\[
||\hat{A}^{-\sigma}g||_{C(\Omega)}\leq C||g||_{L_2(\Omega)}.
\]
\end{lem}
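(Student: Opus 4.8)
The plan is to realize this as (a sharp, eigenfunction-expansion form of) the Sobolev embedding $D(\hat A^\sigma)\hookrightarrow C(\Omega)$ valid for $2\sigma>N/2$, and to prove it directly via a uniform bound on the spectral function of $\hat A$ rather than by identifying $D(\hat A^\sigma)$ with a Sobolev space. For $g\in L_2(\Omega)$ write $\hat A^{-\sigma}g(x)=\sum_{k=1}^\infty\lambda_k^{-\sigma}g_kv_k(x)$. At a fixed $x\in\Omega$, the Cauchy--Schwarz inequality together with Parseval's identity $\sum_k|g_k|^2=\|g\|_{L_2(\Omega)}^2$ gives
\[
|\hat A^{-\sigma}g(x)|\le\Big(\sum_{k=1}^\infty\lambda_k^{-2\sigma}|v_k(x)|^2\Big)^{1/2}\Big(\sum_{k=1}^\infty|g_k|^2\Big)^{1/2}=\Big(\sum_{k=1}^\infty\lambda_k^{-2\sigma}|v_k(x)|^2\Big)^{1/2}\|g\|_{L_2(\Omega)}.
\]
Hence it suffices to prove that
\[
\Theta(x):=\sum_{k=1}^\infty\lambda_k^{-2\sigma}|v_k(x)|^2\le C\quad\text{uniformly in }x\in\Omega,\qquad(\star)
\]
together with the uniform convergence of this series on $\Omega$ (the latter is needed for continuity, not merely boundedness, of $\hat A^{-\sigma}g$).

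To prove $(\star)$ I would use the standard uniform estimate for the spectral function $e(x,\lambda):=\sum_{\lambda_k\le\lambda}|v_k(x)|^2$ of the Dirichlet Laplacian on a bounded domain, namely $e(x,\lambda)\le C_0\lambda^{N/2}$ for $\lambda\ge1$ with $C_0$ independent of $x$. This is classical and has a short proof by heat-kernel domination: the Dirichlet heat kernel of $\Omega$ is pointwise dominated by the Gaussian kernel of $\mathbb{R}^N$, whence $\sum_ke^{-t\lambda_k}|v_k(x)|^2=K_\Omega(t,x,x)\le(4\pi t)^{-N/2}$; bounding $|v_k(x)|^2\le e\,e^{-\lambda_k/\lambda}|v_k(x)|^2$ for $\lambda_k\le\lambda$, summing over such $k$, and putting $t=1/\lambda$ yields $e(x,\lambda)\le C_0\lambda^{N/2}$. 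Granting this, a dyadic decomposition of the series defining $\Theta(x)$ (splitting according to $\lambda_k<1$ and $2^j\le\lambda_k<2^{j+1}$, $j\ge0$) gives
\[
\Theta(x)\le\lambda_1^{-2\sigma}e(x,1)+\sum_{j\ge0}2^{-2\sigma j}e(x,2^{j+1})\le C_0\Big(\lambda_1^{-2\sigma}+2^{N/2}\sum_{j\ge0}2^{j(N/2-2\sigma)}\Big),
\]
using $\lambda_1>0$. The geometric series converges exactly because $N/2-2\sigma<0$, i.e.\ because $\sigma>N/4$, which proves $(\star)$; the same estimate applied to tails shows that $\sum_{k>m}\lambda_k^{-2\sigma}|v_k(x)|^2$ is bounded uniformly in $x$ by a quantity tending to $0$ as $m\to\infty$.

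Finally, to pass to membership in $C(\Omega)$ with the stated operator bound: each partial sum $S_ng(x)=\sum_{k=1}^n\lambda_k^{-\sigma}g_kv_k(x)$ is continuous on $\Omega$, since every eigenfunction $v_k$ is continuous there by interior elliptic regularity. Applying the Cauchy--Schwarz estimate above to the tail $\sum_{m<k\le n}$ and using the uniform tail bound just noted, we obtain $\sup_{x\in\Omega}|S_ng(x)-S_mg(x)|\to0$ as $m,n\to\infty$; hence $S_ng\to\hat A^{-\sigma}g$ uniformly on $\Omega$, so $\hat A^{-\sigma}g\in C(\Omega)$, and letting $n\to\infty$ in $|S_ng(x)|\le\Theta(x)^{1/2}\|g\|_{L_2(\Omega)}$ yields $\|\hat A^{-\sigma}g\|_{C(\Omega)}\le\sqrt{C}\,\|g\|_{L_2(\Omega)}$. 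The only genuinely nontrivial ingredient is the uniform spectral-function estimate $e(x,\lambda)\le C_0\lambda^{N/2}$, which is itself a classical fact --- indeed the authors simply quote the whole lemma from \cite{K}; I expect establishing it to be the main obstacle in a fully self-contained proof, while everything else is routine manipulation with Cauchy--Schwarz and a geometric series.
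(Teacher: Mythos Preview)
The paper does not prove this lemma at all; it merely quotes it from the monograph of Krasnoselskii et al.\ \cite{K}, so there is no in-paper argument to compare against. Your self-contained proof is correct: the Cauchy--Schwarz reduction to the uniform bound on $\Theta(x)=\sum_k\lambda_k^{-2\sigma}|v_k(x)|^2$, the heat-kernel domination $K_\Omega(t,x,x)\le(4\pi t)^{-N/2}$ giving $e(x,\lambda)\le C_0\lambda^{N/2}$, and the dyadic summation under the hypothesis $2\sigma>N/2$ are all standard and sound, and the passage to continuity via uniform convergence of partial sums is clean. This is in fact essentially how such statements are proved in the literature (and presumably in \cite{K} as well), so you have supplied exactly what the paper chose to omit.
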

In order to prove the existence of solutions of forward and inverse problems, it is necessary to study the convergence of the following series:
\begin{equation}\label{ml2}
\sum\limits_{k=1}^\infty \lambda_k^\tau|h_k|^2, \quad \tau > \frac{N}{2},
\end{equation}
where $h_k$ are the Fourier coefficients of function $h(x)$. In the case of integers $\tau$, in the fundamental paper \cite{Il} by V.A. Il'in, conditions are obtained for the convergence of such series in terms of the membership of function $h(x)$ in the classical Sobolev spaces $W_2^k(\Omega)$. To formulate these conditions, we introduce the class $\hat{W}_2^1(\Omega)$ as the closure in the $W_2^1(\Omega)$ norm of the set of all functions that are continuously differentiable in $\Omega$ and vanish near the boundary of $\Omega$.

The theorem of V. A. Il'in states that, if function $h(x)$ satisfies the conditions
 \begin{equation}\label{ml3}
h(x)\in W_2^{\big[\frac{N}{2}\big]+1}(\Omega) \quad and \quad h(x),\Delta h(x),....,\Delta^{\big[\frac{N}{4}\big]}h(x) \in \hat{W}_2^1(\Omega),
\end{equation}
then the number series (\ref{ml8}) converges. Here $[a]$ denotes the integer part of the number $a$.
 Similarly, if in (\ref{ml8}) we replace $\tau$ by $\tau+2$, then the convergence conditions will have the form:

\begin{equation}\label{ml4}
h(x)\in W_2^{\big[\frac{N}{2}\big]+3}(\Omega)\quad and \quad h(x),\Delta h(x),....,\Delta^{\big[\frac{N}{4}\big]+1}h(x) \in \hat{W}_2^1(\Omega).
\end{equation}

For $0 < \rho < 1$ and an arbitrary complex number $\mu$, let $E_{\rho, \mu}(z)$ denote the Mittag-Leffler function with two parameters of the complex argument $z$:
\begin{equation}\label{ml}
E_{\rho, \mu}(z)= \sum\limits_{k=0}^\infty \frac{z^k}{\Gamma(\rho
k+\mu)}.
\end{equation}
If the parameter $\mu =1$, then we have the classical
Mittag-Leffler function: $ E_{\rho}(z)= E_{\rho, 1}(z)$.

Recall some properties of the Mittag-Leffler functions (see, e.g. \cite{Dzh66}, p. 134 and p. 136).

\begin{lem}\label{mll4} For any $t\geq 0$ one has
\begin{equation}\label{mL_1}
|E_{\rho, \mu}(-t)|\leq \frac{C}{1+t},
\end{equation}
where constant $C$ does not depend on $t$ and $\mu$.
\end{lem}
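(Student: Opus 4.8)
The plan is to obtain the estimate from the classical contour-integral representation of the Mittag-Leffler function, which is precisely the content of the cited pages of \cite{Dzh66}. Fix an angle $\theta$ with $\pi\rho/2<\theta<\pi\rho$, and let $\gamma$ be the contour made up of the two rays $\{\zeta:\arg\zeta=\pm\theta,\ |\zeta|\ge 1\}$ together with the arc $\{\zeta:|\zeta|=1,\ |\arg\zeta|\le\theta\}$, with the standard counterclockwise orientation. Then for $0<\rho<1$ and every complex $\mu$,
\begin{equation}\label{MLrep}
E_{\rho,\mu}(z)=\frac{1}{2\pi\rho i}\int_{\gamma}\frac{e^{\zeta^{1/\rho}}\,\zeta^{(1-\mu)/\rho}}{\zeta-z}\,d\zeta ,
\end{equation}
valid for every $z$ lying outside $\gamma$. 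Since $\theta<\pi$, the point $z=-t$ with $t\ge 0$ always lies outside $\gamma$, so (\ref{MLrep}) applies with denominator $\zeta+t$; moreover, with the radius taken equal to $1$, the contour stays away from the origin and no convergence issue arises there.

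Next I would split $\gamma$ into the arc $\gamma_0$ and the two rays $\gamma_\pm$ and bound the three contributions to (\ref{MLrep}). On $\gamma_0=\{|\zeta|=1\}$ the numerator $|e^{\zeta^{1/\rho}}\zeta^{(1-\mu)/\rho}|$ is bounded by a constant depending only on $\rho$ and on a bound for $\mu$, while for $\zeta=e^{i\alpha}$ with $|\alpha|\le\theta$ one has $|\zeta+t|^2=1+2t\cos\alpha+t^2\ge 1+2t\cos\theta+t^2\ge c_\theta(1+t)^2$, the last step because $\theta<\pi$ is strict, so the ratio $(1+2s\cos\theta+s^2)/(1+s)^2$ is continuous and strictly positive on $[0,\infty]$ (it tends to $1$ at both ends) and hence bounded below by some $c_\theta>0$; thus the arc contributes at most $C/(1+t)$. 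On the rays, writing $\zeta=re^{\pm i\theta}$ with $r\ge 1$, we get $\mathrm{Re}\,\zeta^{1/\rho}=r^{1/\rho}\cos(\theta/\rho)$ with $\cos(\theta/\rho)<0$ ---this is exactly why $\theta<\pi\rho$ was chosen, so that $\pi/2<\theta/\rho<\pi$--- hence $|e^{\zeta^{1/\rho}}|\le e^{-c\,r^{1/\rho}}$, while $|\zeta^{(1-\mu)/\rho}|\le C\,r^{(1-\mathrm{Re}\,\mu)/\rho}$ and, by the same quadratic argument, $|\zeta+t|\ge c_\theta(r+t)\ge c_\theta(1+t)$. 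Therefore the ray contribution is at most
\[
\frac{C}{1+t}\int_1^\infty e^{-c\,r^{1/\rho}}\,r^{(1-\mathrm{Re}\,\mu)/\rho}\,dr ,
\]
and this integral converges since the exponential decay dominates the power. Summing the three pieces yields $|E_{\rho,\mu}(-t)|\le C/(1+t)$.

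The one point deserving a word of caution is the assertion that $C$ is independent of $\mu$: the bound is uniform only when $\mu$ ranges over a bounded subset of $\mathbb{C}$, since both the numerator bound on $\gamma_0$ and the value of $\int_1^\infty e^{-c r^{1/\rho}}r^{(1-\mathrm{Re}\,\mu)/\rho}\,dr$ depend on $\mu$ only through $\mathrm{Re}\,\mu$ and $|\mathrm{Im}\,\mu|$ and are locally bounded; this is harmless here, where only finitely many values $\mu\in\{1,\rho\}$ and the like occur. A shorter but essentially equivalent route is to handle $0\le t\le 1$ by continuity of the series (\ref{ml}) and $t\ge 1$ by the asymptotic expansion $E_{\rho,\mu}(-t)=-\sum_{k=1}^{m}(-t)^{-k}/\Gamma(\mu-\rho k)+O(t^{-m-1})$, valid because $|\arg(-t)|=\pi>\pi\rho/2$, whose leading term is $O(1/t)$; since that expansion is itself derived from (\ref{MLrep}), I would present the contour estimate directly. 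In any case, the real work is just the bookkeeping of setting up and justifying (\ref{MLrep}); there is no genuine obstacle beyond that.
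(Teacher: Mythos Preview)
The paper does not prove this lemma at all; it simply records it as a known property of the Mittag-Leffler function and points to \cite{Dzh66}, p.~134--136. Your argument via the contour-integral representation is precisely the standard derivation found in that reference, and your estimates on the arc and on the rays are correct (in particular, the choice $\pi\rho/2<\theta<\pi\rho$ ensures both that $-t$ lies in the region where the residue-free formula applies and that $\cos(\theta/\rho)<0$, giving the exponential decay on the rays).

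Your caveat about the $\mu$-dependence is well taken and worth stating explicitly: the lemma's assertion that $C$ ``does not depend on $\mu$'' is literally an overstatement, since the bound is uniform only for $\mu$ in a bounded set. As you observe, this is harmless here because the paper only invokes the estimate for the fixed values $\mu\in\{1,\rho,\rho+1,\rho+2\}$.
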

\begin{lem}\label{MLmonoton} (see \cite{Gor}, p. 47).
The classical Mittag-Leffler function of the negative argument $E_\rho(-t)$ is monotonically
decreasing function for all $0 <\rho < 1$ and
\[
0<E_{\rho} (-t)<1,\quad E_{\rho} (0)=1.
\]
\end{lem}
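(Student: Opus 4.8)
The plan is to deduce all three assertions from the classical Stieltjes (Laplace) representation of $E_\rho$ on the negative half-line, which is the substance of the reference \cite{Gor} (going back to Pollard): for $0<\rho<1$ and $t\ge 0$,
\[
E_\rho(-t)=\int_0^\infty e^{-tr}\,K_\rho(r)\,dr,\qquad
K_\rho(r)=\frac{\sin(\pi\rho)}{\pi}\cdot\frac{r^{\rho-1}}{r^{2\rho}+2r^\rho\cos(\pi\rho)+1}.
\]
Once this is available the lemma is immediate, so the real task is to establish the representation.

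To obtain it I would start from a Hankel-type contour representation of the Mittag-Leffler function: for $z$ in a neighbourhood of the ray $\arg z=\pi$,
\[
E_\rho(z)=\frac{1}{2\pi i}\int_{\mathrm{Ha}}\frac{s^{\rho-1}e^{s}}{s^{\rho}-z}\,ds,
\]
where $\mathrm{Ha}$ is a path coming in from $-\infty$ below the cut along $(-\infty,0]$, looping once around the origin, and returning to $-\infty$ above the cut, and $s^\rho$ is taken on the principal branch. For $z=-t$ with $t>0$ the denominator $s^\rho+t$ has no zero in the region cut out by $\mathrm{Ha}$ (here $0<\rho<1$ is used: the would-be root $s=t^{1/\rho}e^{i\pi/\rho}$ has argument exceeding $\pi$), so I may collapse $\mathrm{Ha}$ onto the two banks of the negative real axis. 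The integral over the small circle around the origin and over the arc at infinity tend to $0$, and the remaining contribution is the integral of the jump of $s^{\rho-1}e^{s}/(s^{\rho}+t)$ across the cut; writing $s=re^{\pm i\pi}$ and simplifying gives exactly $e^{-tr}K_\rho(r)$, which yields the displayed formula.

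Granting the representation, the conclusions follow at once. Since $0<\rho<1$ we have $\sin(\pi\rho)>0$, and $r^{2\rho}+2r^\rho\cos(\pi\rho)+1=(r^\rho+\cos(\pi\rho))^2+\sin^2(\pi\rho)>0$, so $K_\rho(r)>0$ for every $r>0$; hence $E_\rho(-t)=\int_0^\infty e^{-tr}K_\rho(r)\,dr>0$ for all $t\ge 0$. Differentiating under the integral sign gives
\[
\frac{d}{dt}E_\rho(-t)=-\int_0^\infty r\,e^{-tr}K_\rho(r)\,dr<0\qquad(t>0),
\]
so $E_\rho(-t)$ is strictly decreasing on $[0,\infty)$ (indeed completely monotone, since repeated differentiation merely inserts further factors $(-r)^n$ under the positive measure). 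Finally $E_\rho(0)=1$ is read off from the power series (\ref{ml}), and combining this with strict monotonicity yields $0<E_\rho(-t)<E_\rho(0)=1$ for every $t>0$.

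The only genuinely delicate step is the contour manipulation that produces the explicit nonnegative kernel $K_\rho$: one must justify that the small-circle and large-arc contributions vanish — using the growth/decay of the integrand, where the restriction $0<\rho<1$ is essential (for $\rho=1$ one has $E_1(-t)=e^{-t}$ and the representation degenerates to a unit mass at $r=1$) — and correctly evaluate the jump across the branch cut. If one prefers to avoid complex analysis entirely, the clean alternative is simply to invoke the complete monotonicity of $t\mapsto E_\rho(-t)$ from \cite{Gor} and then deduce the strict decrease together with the bounds exactly as above; this is the route taken here, which is why the statement is recorded with a reference rather than a full proof.
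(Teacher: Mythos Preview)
The paper does not prove this lemma at all: it simply states the result with a citation to \cite{Gor}, p.~47, and moves on. So there is no ``paper's own proof'' to compare against --- the lemma is quoted as a known fact.

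Your proposal supplies what the paper omits, and correctly so. The argument via the Stieltjes/Laplace representation with the explicit nonnegative kernel $K_\rho$ is exactly the classical route (Pollard's theorem on the complete monotonicity of $E_\rho(-t)$ for $0<\rho<1$), and the deductions --- positivity, strict decrease by differentiating under the integral, the value at $0$ from the series --- are all sound. You also rightly identify the only nontrivial step (the contour deformation and the vanishing of the small-circle and large-arc contributions, where $0<\rho<1$ enters), and your closing remark that one can alternatively just invoke the complete monotonicity result from \cite{Gor} matches precisely what the paper does. In short: your proof is correct and more detailed than the paper, which gives none.
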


\begin{lem}\label{ml8} (see \cite{Dzh66}, formula (2.30), p.135 and \cite{AShZun}, Lemma 4). Let $\mu$ be an arbitrary complex number. Then the following asymptotic estimate holds
\[
E_{\rho, \mu}(-t)= \frac{t^{-1}}{\Gamma(\mu-\rho)} + O(t^{-2}), \quad t>1.
\]
\end{lem}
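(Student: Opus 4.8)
\medskip
\noindent\emph{Sketch of proof.}
The plan is to read off the leading term of the large-$t$ behaviour of $E_{\rho,\mu}(-t)$ from its Mellin--Barnes integral representation. Since $\rho\in(0,1]\subset(0,2)$ and $t>0$, I would start from the classical identity
\[
E_{\rho,\mu}(-t)=\frac{1}{2\pi i}\int_{(c)}\Phi(s)\,ds,\qquad
\Phi(s):=\frac{\Gamma(s)\,\Gamma(1-s)}{\Gamma(\mu-\rho s)}\,t^{-s},
\]
where $(c)$ denotes the vertical line $\mathrm{Re}\,s=c$, $0<c<1$, oriented upwards, which separates the poles $s=0,-1,-2,\dots$ of $\Gamma(s)$ from the poles $s=1,2,3,\dots$ of $\Gamma(1-s)$. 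This representation is classical (it is implicit in \cite{Dzh66}; alternatively one may take the Hankel-type contour representation of $E_{\rho,\mu}$, expand the Cauchy kernel in powers of $t^{-1}$, and recognise Hankel's integral for $1/\Gamma$). Checking that $z=-t$ lies in the sector where this identity is valid is immediate, since $\arg(-z)=0$ and $\rho<2$.

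The main step is then to slide the contour $(c)$ rightwards, past the two simple poles $s=1$ and $s=2$ of $\Gamma(1-s)$, onto a line $(c')$ with $2<c'<3$. Integrating $\Phi$ over the boundary of the rectangle between $(c)$ and $(c')$ and letting the height tend to infinity (the horizontal segments contribute zero, by the decay discussed below), the residue theorem yields
\[
E_{\rho,\mu}(-t)=-\mathop{\mathrm{Res}}\limits_{s=1}\Phi(s)-\mathop{\mathrm{Res}}\limits_{s=2}\Phi(s)+\frac{1}{2\pi i}\int_{(c')}\Phi(s)\,ds .
\]
Here I would use $\Gamma(1)=1$ and $\mathop{\mathrm{Res}}\limits_{s=1}\Gamma(1-s)=-1$, so that the first residue equals $-t^{-1}/\Gamma(\mu-\rho)$ and contributes exactly $t^{-1}/\Gamma(\mu-\rho)$ — the asserted main term (which degenerates to $0$ precisely when $\mu-\rho\in\{0,-1,-2,\dots\}$, consistently with, e.g., $E_{1,1}(-t)=e^{-t}$). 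Likewise $\Gamma(2)=1$ and $\mathop{\mathrm{Res}}\limits_{s=2}\Gamma(1-s)=1$ give the second residue as $t^{-2}/\Gamma(\mu-2\rho)$, which is already of the required order $O(t^{-2})$.

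What remains, and what I expect to be the only genuinely analytic point, is the estimate of the shifted integral. On $(c')$ one has $|t^{-s}|=t^{-c'}\le t^{-2}$ for $t>1$, while, writing $s=c'+i\tau$, the reflection formula $\Gamma(s)\Gamma(1-s)=\pi/\sin(\pi s)$ together with Stirling's asymptotics for $1/\Gamma(\mu-\rho s)$ shows that $|\Phi(s)/t^{-s}|\le C(\rho,\mu)\,(1+|\tau|)^{\alpha}\,e^{-\pi(1-\rho/2)|\tau|}$ for a suitable exponent $\alpha=\alpha(\rho,\mu)$. Because $\rho\le 1<2$, the constant $\pi(1-\rho/2)$ is strictly positive, so the integrand is absolutely integrable and $\bigl|\tfrac{1}{2\pi i}\int_{(c')}\Phi\bigr|\le C(\rho,\mu)\,t^{-c'}=O(t^{-2})$ uniformly in $t>1$. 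This is the step where the restriction $0<\rho<2$ is essential — the exponential decay of the Mellin integrand in $|\mathrm{Im}\,s|$ must outweigh the exponential growth of $1/\Gamma(\mu-\rho s)$. Collecting the two residues and this bound gives $E_{\rho,\mu}(-t)=t^{-1}/\Gamma(\mu-\rho)+O(t^{-2})$ for $t>1$; continuing to shift the line past $s=3,4,\dots$ would in the same way produce the full asymptotic series, but only the first term is needed here.
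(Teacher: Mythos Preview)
The paper does not supply its own proof of this lemma: it is stated with citations to \cite{Dzh66} (formula (2.30)) and \cite{AShZun} (Lemma 4) and then used as a black box. So there is no in-paper argument to compare against; your sketch is filling in what the paper delegates to the literature.

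Your Mellin--Barnes approach is correct. The representation, the location of the poles of $\Gamma(1-s)$, the residue values at $s=1,2$, and the decay estimate on the shifted line (governed by the positive constant $\pi(1-\rho/2)$ since $\rho<2$) are all standard and accurately recalled. The identification of the leading term $t^{-1}/\Gamma(\mu-\rho)$ and of the second residue plus tail integral as $O(t^{-2})$ is exactly what is needed.

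For context, the argument in the cited source \cite{Dzh66} proceeds differently: it starts from the Hankel-type contour representation of $E_{\rho,\mu}(z)$ as an integral of $\zeta^{\mu-1}e^{\zeta}/(\zeta^\rho-z)$ over a loop encircling the negative real axis, expands the Cauchy kernel $1/(\zeta^\rho-z)$ in descending powers of $z$, and then recognises each coefficient integral as Hankel's formula for $1/\Gamma(\mu-k\rho)$. You mention this route in passing as an alternative. Both methods are classical and equivalent in difficulty; the Mellin--Barnes version you chose has the minor advantage that the remainder bound comes from a single absolute-convergence estimate on a vertical line, whereas the Hankel approach requires bounding the tail of a geometric-type expansion on the contour. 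Either is entirely adequate here.
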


\begin{lem}\label{MLint} (see \cite{Gor}, formula (4.4.5), p. 61).
		Let $\rho > 0 $, $\mu>0$ and $\lambda \in C$. Then for all positive $t$ one has
		\begin{equation}\label{MLintFormula}
			\int\limits_0^t (t-\eta)^{\mu-1}\eta^{\rho-1}E_{\rho,\rho}(\lambda\eta^\rho)d\eta=t^{\mu+\rho-1} E_{\rho,\rho+\mu}(\lambda t^\rho).
		\end{equation}
	\end{lem}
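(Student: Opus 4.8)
The plan is to establish the identity by expanding the Mittag-Leffler function inside the integral into its defining power series (\ref{ml}) and integrating term by term. From $E_{\rho,\rho}(\lambda\eta^\rho)=\sum_{k=0}^\infty \lambda^k\eta^{\rho k}/\Gamma(\rho k+\rho)$ one gets $\eta^{\rho-1}E_{\rho,\rho}(\lambda\eta^\rho)=\sum_{k=0}^\infty \lambda^k\eta^{\rho(k+1)-1}/\Gamma(\rho(k+1))$, so after multiplying by $(t-\eta)^{\mu-1}$ the left-hand side of (\ref{MLintFormula}) becomes the integral over $(0,t)$ of a series that converges uniformly for $\eta$ in compact subsets of $[0,t]$, since the Mittag-Leffler function is entire. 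Because the weight $(t-\eta)^{\mu-1}\eta^{\rho-1}$ is integrable on $(0,t)$ (here $\mu>0$ and $\rho>0$ are used), term-by-term integration is legitimate, e.g. by dominated convergence against this integrable majorant.

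It then remains to evaluate each term, which is an Euler Beta integral: with the substitution $\eta=ts$,
\[
\int\limits_0^t (t-\eta)^{\mu-1}\eta^{\rho(k+1)-1}\,d\eta=t^{\mu+\rho(k+1)-1}B\big(\rho(k+1),\mu\big)=t^{\mu+\rho(k+1)-1}\,\frac{\Gamma(\rho(k+1))\,\Gamma(\mu)}{\Gamma(\rho(k+1)+\mu)}.
\]
Substituting this back, the factors $\Gamma(\rho(k+1))$ cancel and the resulting sum is $\Gamma(\mu)\,t^{\mu+\rho-1}\sum_{k=0}^\infty (\lambda t^\rho)^k/\Gamma(\rho k+\rho+\mu)$; recognizing the remaining series as $E_{\rho,\rho+\mu}(\lambda t^\rho)$ finishes the computation.

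An alternative, perhaps cleaner, route is via the Laplace transform in $t$: the left-hand side of (\ref{MLintFormula}) is the convolution of the kernel $t^{\mu-1}$ with $\eta^{\rho-1}E_{\rho,\rho}(\lambda\eta^\rho)$, whose Laplace transforms are $\Gamma(\mu)s^{-\mu}$ and $(s^\rho-\lambda)^{-1}$ respectively (for $\Re s$ sufficiently large), so the transform of the left-hand side equals $\Gamma(\mu)\,s^{-\mu}(s^\rho-\lambda)^{-1}$, which is $\Gamma(\mu)$ times the transform of $t^{\rho+\mu-1}E_{\rho,\rho+\mu}(\lambda t^\rho)$; uniqueness of the inverse transform, together with analytic continuation in $\lambda$, then yields the identity. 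Neither argument presents a genuine difficulty — the only point requiring care is the justification of interchanging summation and integration near the endpoints. I note that both computations in fact produce the factor $\Gamma(\mu)$ on the right-hand side, i.e. the identity reads $\int_0^t (t-\eta)^{\mu-1}\eta^{\rho-1}E_{\rho,\rho}(\lambda\eta^\rho)\,d\eta=\Gamma(\mu)\,t^{\mu+\rho-1}E_{\rho,\rho+\mu}(\lambda t^\rho)$, which corresponds to the Riemann--Liouville fractional-integral normalization $\frac{1}{\Gamma(\mu)}\int_0^t(t-\eta)^{\mu-1}(\cdots)\,d\eta$ under which the statement in \cite{Gor} is phrased; this constant should be kept in mind wherever the lemma is applied.
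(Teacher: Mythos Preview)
Your argument is correct and is in fact the standard one; the paper does not supply its own proof of this lemma but merely quotes the identity from \cite{Gor}, so there is nothing to compare at the level of method. The series-expansion route you carry out (term-by-term integration against the Beta kernel, with the justification via an integrable majorant times a uniformly bounded entire-function factor) is exactly how the formula is derived in that reference, and your Laplace-transform alternative is the other classical derivation.

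Your closing observation is also on point: the identity as printed in the paper is missing a factor $\Gamma(\mu)$ on the right-hand side. With $\lambda=0$ this is immediately visible, since the left-hand side reduces to $\Gamma(\rho)^{-1}t^{\mu+\rho-1}B(\rho,\mu)=\Gamma(\mu)\,t^{\mu+\rho-1}/\Gamma(\rho+\mu)$ while the stated right-hand side gives only $t^{\mu+\rho-1}/\Gamma(\rho+\mu)$. Formula (4.4.5) in \cite{Gor} is written for the Riemann--Liouville integral $I^\mu$, which carries the prefactor $1/\Gamma(\mu)$; the paper has dropped that normalization when restating it. This does no harm in the paper's applications, because every use of the lemma (in the proofs of Theorem~\ref{main1}, Lemma~\ref{invvv1}, and Lemma~\ref{lemmaSub}) has $\mu=1$, where $\Gamma(1)=1$. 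Still, your remark that ``this constant should be kept in mind wherever the lemma is applied'' is well taken.
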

\

\section{Well-posedness of forward problem (\ref{prob1})}

First we consider the following  problem for the homogeneous equation:

\begin{equation}\label{prob.y}
\left\{
\begin{aligned}
& D_t^\rho y(x,t)-\Delta y(x,t) =0, \quad (x,t)\in \Omega\times (0,T],\\
&y{(x,t)}|_{\partial{\Omega}}=0, \\
&y(x,0)=\varphi(x), \quad x \in \Omega,
\end{aligned}
\right.
\end{equation}
where $\varphi (x) $ is a given function.

\begin{thm}\label{main}
 Let function $\varphi (x) $ satisfy conditions (\ref{ml3}). Then problem (\ref{prob.y}) has a unique solution:
\begin{equation}\label{prob.s}
       y(x,t)=\sum\limits_{k=1}^{\infty} \varphi_k E_\rho(-\lambda_k t^\rho )v_k(x),
    \end{equation}
    where $\varphi_k$ are the Fourier coefficients of function $\varphi(x)$.
\end{thm}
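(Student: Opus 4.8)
The plan is to use the classical Fourier method. First I would expand the sought solution in the orthonormal basis $\{v_k(x)\}$, writing $y(x,t)=\sum_k y_k(t)v_k(x)$ with $y_k(t)=(y(\cdot,t),v_k)$. Substituting formally into the homogeneous equation and using $-\Delta v_k=\lambda_k v_k$ together with the boundary condition, one obtains for each $k$ the fractional ODE $D_t^\rho y_k(t)+\lambda_k y_k(t)=0$ with initial datum $y_k(0)=\varphi_k$. It is classical (and can be checked by inserting the series \eqref{ml} and using $D_t^\rho t^{\rho m}=\frac{\Gamma(\rho m+1)}{\Gamma(\rho m+1-\rho)}t^{\rho m-\rho}$ termwise) that the unique solution of this Cauchy problem is $y_k(t)=\varphi_k E_\rho(-\lambda_k t^\rho)$, which yields the formula \eqref{prob.s}. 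So far everything is formal; the real work is to justify that the series \eqref{prob.s} genuinely defines a function satisfying all three requirements of Definition \ref{def} (with $\varphi$ in place of the data there), i.e. that $y$, $\Delta y$ and $D_t^\rho y$ are continuous up to the boundary in the appropriate sense.

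For the regularity I would proceed as follows. Using Lemma \ref{mll4} one has $|E_\rho(-\lambda_k t^\rho)|\le C$ uniformly, so convergence of $\sum_k\lambda_k^\tau|\varphi_k|^2$ for suitably large $\tau$ controls the tails. The clean way is to write, for $\sigma>N/4$, $v_k(x)=\lambda_k^{-\sigma}\hat A^\sigma v_k(x)$ and invoke Lemma \ref{ml1}: the partial sums of $\sum_k \varphi_k E_\rho(-\lambda_k t^\rho)v_k$ are images under the bounded operator $\hat A^{-\sigma}\colon L_2(\Omega)\to C(\Omega)$ of the $L_2$-partial sums of $\sum_k \lambda_k^\sigma\varphi_k E_\rho(-\lambda_k t^\rho)v_k$; by Parseval the latter converge in $L_2$ provided $\sum_k\lambda_k^{2\sigma}|\varphi_k|^2<\infty$, uniformly in $t$ by the bound on $E_\rho$. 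Choosing $2\sigma=[N/2]+1$ (an integer $>N/2$), the Il'in conditions \eqref{ml3} on $\varphi$ guarantee exactly $\sum_k\lambda_k^{[N/2]+1}|\varphi_k|^2<\infty$, hence $y(\cdot,t)\in C(\overline\Omega)$ uniformly in $t$, and at $t=0$ one gets $y(x,0)=\sum_k\varphi_k v_k(x)=\varphi(x)$ since $E_\rho(0)=1$; continuity in $t$ (including $t=0$) follows from the uniform-in-$t$ convergence and continuity of each term, using Lemma \ref{MLmonoton} or \ref{mll4} for the behaviour near $t=0$. The boundary condition $y|_{\partial\Omega}=0$ holds because each $v_k$ vanishes there and the convergence is uniform on $\overline\Omega$.

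For $\Delta y$ and $D_t^\rho y$ one differentiates the series termwise: $-\Delta y=\sum_k\lambda_k\varphi_k E_\rho(-\lambda_k t^\rho)v_k$ and, by the ODE, $D_t^\rho y=-\sum_k\lambda_k\varphi_k E_\rho(-\lambda_k t^\rho)v_k$ as well. Now there is an extra factor $\lambda_k$, so repeating the $\hat A^{-\sigma}$ argument I need $\sum_k\lambda_k^{2\sigma+2}|\varphi_k|^2<\infty$ with $2\sigma+2=[N/2]+3$, i.e. the very condition provided by \eqref{ml3} (note \eqref{ml3} already packs in the two extra powers through the requirement $h,\Delta h,\dots,\Delta^{[N/4]}h\in\hat W_2^1$, which is Il'in's criterion for $\sum\lambda_k^{[N/2]+3}|h_k|^2<\infty$). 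However, for $\Delta y$ and $D_t^\rho y$ I only claim continuity on $\overline\Omega\times(0,T]$, away from $t=0$: this is the point of Lemma \ref{mll4}, which gives $|E_\rho(-\lambda_k t^\rho)|\le C(1+\lambda_k t^\rho)^{-1}$, so on any interval $t\ge\delta>0$ the factor $\lambda_k|E_\rho(-\lambda_k t^\rho)|$ is bounded and the gain in decay compensates, giving uniform convergence on $\overline\Omega\times[\delta,T]$ and hence continuity there; letting $\delta\to0$ covers $(0,T]$. Termwise application of $D_t^\rho$ is legitimate because the differentiated series converges uniformly on $[\delta,T]$ and $D_t^\rho$ is an integral operator against the locally integrable kernel $\omega_{1-\rho}$. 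Finally, uniqueness: if $\{y^{(1)}\},\{y^{(2)}\}$ are two solutions, their difference $w$ solves \eqref{prob.y} with $\varphi\equiv0$; taking the inner product of the equation with $v_k$ gives $D_t^\rho w_k+\lambda_k w_k=0$, $w_k(0)=0$, whence $w_k\equiv0$ for every $k$ (uniqueness for the scalar fractional Cauchy problem, e.g. via Gronwall-type estimate for the equivalent Volterra equation), and completeness of $\{v_k\}$ forces $w\equiv0$.

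The main obstacle I anticipate is the bookkeeping in the last paragraph: matching the exact Sobolev/Il'in hypotheses \eqref{ml3} to the powers of $\lambda_k$ that arise for each of the three objects $y$, $\Delta y$, $D_t^\rho y$, and handling the loss of uniform convergence at $t=0$ for the derivatives by exploiting the sharp decay in Lemma \ref{mll4} rather than the mere boundedness in Lemma \ref{MLmonoton}.
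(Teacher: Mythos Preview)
Your plan is correct and follows essentially the same route as the paper: derive \eqref{prob.s} as the formal Fourier solution, then justify termwise application of $-\Delta$ (and hence of $D_t^\rho$, via the equation) by combining Lemma~\ref{ml1} with the decay estimate of Lemma~\ref{mll4}, at the cost of a $t^{-\rho}$ blow-up near $t=0$; uniqueness via completeness of $\{v_k\}$ is also the paper's argument.

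One point needs correcting. Your parenthetical claim that \eqref{ml3} ``is Il'in's criterion for $\sum\lambda_k^{[N/2]+3}|h_k|^2<\infty$'' is wrong: according to the paper, \eqref{ml3} guarantees only $\sum_k\lambda_k^\tau|h_k|^2<\infty$ with $\tau=[N/2]+1$, while it is the stronger condition \eqref{ml4} that yields the exponent $[N/2]+3$. So the first branch of your argument for $\Delta y$ --- ``repeating the $\hat A^{-\sigma}$ argument I need $\sum_k\lambda_k^{2\sigma+2}|\varphi_k|^2<\infty$'' and then citing \eqref{ml3} for that --- does not go through as stated. Fortunately this does not matter, because the second branch you describe is the right one and is exactly what the paper does: using $|E_\rho(-\lambda_k t^\rho)|\le C(1+\lambda_k t^\rho)^{-1}$ from Lemma~\ref{mll4} gives
\[
\lambda_k^{2(\sigma+1)}|E_\rho(-\lambda_k t^\rho)|^2\le C\,\lambda_k^{2(\sigma+1)}(1+\lambda_k t^\rho)^{-2}\le C\,t^{-2\rho}\lambda_k^{2\sigma},
\]
which brings the requirement down to $\sum_k\lambda_k^{2\sigma}|\varphi_k|^2<\infty$ for some $2\sigma>N/2$, and this is precisely what \eqref{ml3} furnishes. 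Delete the erroneous parenthetical and your argument matches the paper's.
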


\begin{proof} This theorem for a more general subdiffusion equation was proved in \cite{AshM}. We only mention the main points of the proof.

Obviously, (\ref{prob.s}) is a formal solution to problem (\ref{prob.y}) (see \cite{Pskhu}, p. 17, \cite{AshCab}).

Let us show that the operators $A = -\Delta$ and $D_t^\rho $ can be applied term-by-term to series (\ref{prob.s}) and the resulting series converges uniformly in $(x, t) \in  (\overline{\Omega}\times (0, T])$.

   If $S_j(x,t)$ is the partial sum of series (\ref{prob.s}), then
   \[
-\Delta S_j(x,t)=\sum\limits_{k=1}^{j} \lambda_k \varphi_k E_\rho(-\lambda_k t^\rho )v_k(x).
\]
   Using the equality
    \[
   \hat{A}^{-\sigma} v_k(x) = \lambda_k^{-\sigma}v_k (x),
    \]
with $\sigma > \frac{N}{4}$ and applying Lemma \ref{ml1} for $g(x) =-\Delta S_j(x,t)$, we have
    \[
 ||-\Delta S_j(x,t)||^2_{C(\Omega)}= ||\sum\limits_{k=1}^{j} \lambda_k \varphi_k E_\rho(-\lambda_k t^\rho )v_k(x)||^2_{C(\Omega)}\leq C \sum\limits_{k=1}^{j} \lambda^{2(\sigma+1)}_k  |\varphi_k E_\rho(-\lambda_k t^\rho )|^2. \]
Apply estimates (\ref{mL_1}), to obtain
 \[ ||-\Delta S_j(x,t)||^2_{C(\Omega)}\leq C \sum\limits_{k=1}^{j} \frac{\lambda^{2(\sigma+1)}_k  |\varphi_k|^2}{|1+\lambda_k  t^\rho|^2} \leq Ct^{-2\rho} \sum\limits_{k=1}^{j} \lambda^{2\sigma}_k  |\varphi_k|^2,\, \quad t > 0.     \]
Therefore if $\varphi (x)$ satisfies conditions (\ref{ml3}), then $ -\Delta y(x, t) \in C(\overline{\Omega}\times (0, T])$.

   From equation (\ref{prob.y}) one has
$D_t^\rho y(x, t) = \Delta y(x, t)$, $t > 0$, and hence we get $D_t^\rho y(x, t) \in C(\overline{\Omega}\times (0, T])$.

The uniqueness of the solution follows from the completeness of the system $\{v_k(x)\}$ in $L_2(\Omega)$ (see \cite{4}). We only note that it is important here that the derivatives of the solution involved in the equation are continuous up to the boundary of domain $\Omega$ (see Definition \ref{def}).  Nevertheless, below we give a proof of the uniqueness of a solution of the inverse problem in detail (see the proof of Theorem \ref{thmNotChange}).
\end{proof}
Now consider the following auxiliary initial-boundary value problem:
\begin{equation}\label{prob6}
    \left\{
    \begin{aligned}
        & D_t^\rho \omega(x,t)
    -\Delta\omega(x,t) =f(x)g(t),\,\, (x,t)\in \Omega \times(0,T],\\
        &\omega(x,t)|_{\partial{\Omega}}=0, \\
        & \omega(x,0) =0,\,\,  x\in {\Omega}.
    \end{aligned}
    \right.
\end{equation}

\begin{thm}\label{main1} Let $f(x)$ satisfy conditions (\ref{ml3}) and
$ g(t)\in C[0,T]$. Then problem (\ref{prob6}) has a unique solution
\begin{equation}\label{prob4}
\omega(x,t)=\sum\limits_{k=1}^\infty f_k \left [\int\limits_0^t \eta^{\rho-1} E_{\rho, \rho} (-\lambda_k  \eta^\rho ) g(t-\eta) d\eta \right]  v_k(x).
\end{equation}
where $f_k=(f,v_k)$.
\end{thm}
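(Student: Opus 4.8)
The plan is to follow the Fourier-method scheme used in the proof of Theorem~\ref{main} (cf.\ also \cite{AshM}), the only genuinely new work being the analysis of the Duhamel term. One seeks $\omega(x,t)=\sum_{k=1}^\infty\omega_k(t)v_k(x)$; substituting into (\ref{prob6}) and using orthonormality of $\{v_k\}$ reduces the problem to the scalar fractional relaxation problems
\[
D_t^\rho\omega_k(t)+\lambda_k\omega_k(t)=f_k\,g(t),\qquad \omega_k(0)=0 .
\]
Writing $I^\rho h(t)=\int_0^t\omega_\rho(t-\eta)h(\eta)\,d\eta$ for the Riemann--Liouville integral, this is equivalent to $\omega_k=I^\rho\!\left(f_kg-\lambda_k\omega_k\right)$; iterating and summing the resulting (super-exponentially convergent) Neumann series, or else verifying directly with the help of Lemma~\ref{MLint}, one arrives at the closed form
\[
\omega_k(t)=f_k\int_0^t\eta^{\rho-1}E_{\rho,\rho}(-\lambda_k\eta^\rho)\,g(t-\eta)\,d\eta
\]
of (\ref{prob4}) (cf.\ \cite{Pskhu}, \cite{AshCab}). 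In particular $\omega_k(0)=0$ and $D_t^\rho\omega_k=f_kg-\lambda_k\omega_k$ in the Caputo sense, which is meaningful even though $g$ is only assumed continuous.

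Next I would justify term-by-term application of $-\Delta$. For the partial sum $S_j$ of (\ref{prob4}) one has $-\Delta S_j(x,t)=\sum_{k=1}^j\lambda_k\omega_k(t)v_k(x)=\hat{A}^{-\sigma}\bigl(\sum_{k=1}^j\lambda_k^{1+\sigma}\omega_k(t)v_k(x)\bigr)$, so Lemma~\ref{ml1} with any $\sigma>\frac{N}{4}$ gives
\[
\|-\Delta S_j(\cdot,t)\|_{C(\Omega)}^2\le C\sum_{k=1}^j\lambda_k^{2(1+\sigma)}|\omega_k(t)|^2 .
\]
By Lemma~\ref{mll4}, $|E_{\rho,\rho}(-\lambda_k\eta^\rho)|\le C(1+\lambda_k\eta^\rho)^{-1}$, hence, uniformly in $t\in(0,T]$,
\[
|\omega_k(t)|\le C\|g\|_{C[0,T]}|f_k|\int_0^t\frac{\eta^{\rho-1}}{1+\lambda_k\eta^\rho}\,d\eta=\frac{C\|g\|_{C[0,T]}}{\rho\,\lambda_k}|f_k|\ln(1+\lambda_k t^\rho).
\]
Substituting, $\|-\Delta S_j(\cdot,t)\|_{C(\Omega)}^2\le C\|g\|_{C[0,T]}^2\sum_{k=1}^j\lambda_k^{2\sigma}\bigl(\ln(1+\lambda_k)\bigr)^2|f_k|^2$; and since $\bigl(\ln(1+\lambda_k)\bigr)^2\le C_\varepsilon\lambda_k^\varepsilon$ for every $\varepsilon>0$, choosing $\sigma\in\left(\frac{N}{4},\frac{\tau}{2}\right)$, where $\tau>\frac{N}{2}$ is an exponent for which conditions (\ref{ml3}) guarantee (via Il'in's theorem) that $\sum_k\lambda_k^\tau|f_k|^2<\infty$, bounds the right-hand side by $C\sum_k\lambda_k^\tau|f_k|^2<\infty$. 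Thus $\sum_k\lambda_k\omega_k(t)v_k(x)$ converges uniformly on $\overline{\Omega}\times(0,T]$, and it is $-\Delta\omega(x,t)\in C(\overline{\Omega}\times(0,T])$. The same estimate with $\lambda_k^{2\sigma}$ in place of $\lambda_k^{2(1+\sigma)}$, together with $\omega_k(t)\to0$ as $t\to0$, gives uniform convergence of (\ref{prob4}) itself on $\overline{\Omega}\times[0,T]$, so that $\omega\in C(\overline{\Omega}\times[0,T])$ with $\omega(x,0)=0$; finally $f\in C(\overline{\Omega})$ by (\ref{ml3}).

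To apply $D_t^\rho$ term by term, I observe that $\sum_k\bigl(f_kg-\lambda_k\omega_k\bigr)v_k$ equals $fg+\Delta\omega$ and converges uniformly on $\overline{\Omega}\times(0,T]$; since $\omega=\sum_kI^\rho\!\left(f_kg-\lambda_k\omega_k\right)v_k$ and $I^\rho$ is an integral operator with integrable kernel, the summation may be moved outside, giving $\omega=I^\rho(fg+\Delta\omega)$. Hence $D_t^\rho\omega$ exists, belongs to $C(\overline{\Omega}\times(0,T])$, and $D_t^\rho\omega-\Delta\omega=fg$, so that the pair $\{\omega,f\}$ satisfies all requirements of Definition~\ref{def}. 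Uniqueness reduces to the homogeneous case: the difference of two solutions solves (\ref{prob.y}) with $\varphi\equiv0$; projecting onto $v_k$ (legitimate because $\Delta\omega\in C(\overline{\Omega}\times(0,T])$ and $\omega|_{\partial\Omega}=0$, so Green's identity applies) yields $D_t^\rho\omega_k+\lambda_k\omega_k=0$, $\omega_k(0)=0$, whence each $\omega_k\equiv0$ and, by completeness of $\{v_k\}$ in $L_2(\Omega)$, $\omega\equiv0$; the detailed version of this argument is carried out in the proof of Theorem~\ref{thmNotChange}.

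The main obstacle is twofold: obtaining the estimate of the Duhamel integral uniformly in $t$ down to $t=0$, and making term-by-term fractional differentiation legitimate for a merely continuous $g$. The first is resolved by Lemma~\ref{mll4}, which yields decay of order $\lambda_k^{-1}$ at the price of only a logarithmic loss, absorbed into the gap between $\frac{N}{2}$ and $\tau$ in (\ref{ml2})--(\ref{ml3}); the second is resolved by replacing the differentiated series with the integral identity $\omega=I^\rho(fg+\Delta\omega)$, which simultaneously makes the Caputo derivative well defined.
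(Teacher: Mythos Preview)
Your proof is correct and follows the same Fourier scheme as the paper, but the key estimate of the Duhamel integral is handled differently. The paper does \emph{not} estimate $E_{\rho,\rho}(-\lambda_k\eta^\rho)$ pointwise in the integrand; instead it pulls out $\max|g|$ and then invokes Lemma~\ref{MLint} to compute the remaining integral exactly:
\[
\int_0^t \eta^{\rho-1}E_{\rho,\rho}(-\lambda_k\eta^\rho)\,d\eta
= t^\rho E_{\rho,\rho+1}(-\lambda_k t^\rho),
\]
after which Lemma~\ref{mll4} gives a clean $C/\lambda_k$ bound with no logarithm, yielding $\|-\Delta S_j\|_{C(\Omega)}\le C\|g\|_{C[0,T]}\|f\|_\sigma$ directly. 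Your pointwise use of Lemma~\ref{mll4} on $E_{\rho,\rho}$ produces the extra factor $\ln(1+\lambda_k t^\rho)$, which you then absorb into the gap between $2\sigma$ and $\tau$; this is perfectly legitimate but a detour that the exact identity of Lemma~\ref{MLint} renders unnecessary. The paper's route is therefore sharper and avoids choosing $\sigma$ strictly below $\tau/2$.

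For the term-by-term fractional derivative the paper simply writes $D_t^\rho S_j=-\Delta S_j+\sum_{k=1}^j f_k g(t)v_k(x)$ for the partial sums and appeals to the uniform convergence already established; your reformulation via the integral identity $\omega=I^\rho(fg+\Delta\omega)$ is equivalent but more explicit about why a merely continuous $g$ causes no trouble. Uniqueness is treated identically in both.
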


\begin{proof}
Again, as in the previous theorem, (\ref{prob4}) is a formal solution to problem (\ref{prob6}) (see \cite{Pskhu}, p. 17, \cite{AshCab}).

Let $S_j (x, t)$ be the partial sum of series (\ref{prob4}) and $ \sigma > \frac{N}{4}$. Repeating the above reasoning based on Lemma \ref{ml1}, we arrive at
    \[
    ||-\Delta S_j(x,t)||^2_{C(\Omega)}=	\bigg|\bigg| \sum\limits_{k=1}^\infty \lambda_k f_k \int\limits_0^t \eta^{\rho-1} E_{\rho, \rho} (-\lambda_k  \eta^\rho ) g(t-\eta) d\eta  v_k(x) \bigg|\bigg |^2_{C(\Omega)}\]
    \[
    \leq \bigg|\bigg|\hat{A}^{-\sigma} \sum\limits_{k=1}^\infty \lambda_k^{\sigma+1}f_k \int\limits_0^t \eta^{\rho-1} E_{\rho, \rho} (-\lambda_k  \eta^\rho ) g(t-\eta) d\eta   v_k(x) \bigg|\bigg |^2_{C(\Omega)}\]
    \[
    \leq \bigg|\bigg| \sum\limits_{k=1}^\infty \lambda_k^{\sigma+1} f_k\int\limits_0^t \eta^{\rho-1} E_{\rho, \rho} (-\lambda_k  \eta^\rho ) g(t-\eta) d\eta   v_k(x) \bigg|\bigg |^2_{L_2(\Omega)}\]
    (apply Parseval's equality to obtain)

    \[\leq C\sum\limits_{k=1}^\infty\bigg| \lambda_k^{\sigma+1}f_k \int\limits_0^t \eta^{\rho-1} E_{\rho, \rho} (-\lambda_k  \eta^\rho ) g(t-\eta) d\eta  \bigg |^2
    \]

  \[ \leq	C \sum\limits_{k=1}^\infty \bigg[  \lambda_k^{\sigma+1}|f_k| \max\limits_{0\leq t\leq T}|g(t)|
    \int\limits_0^t \eta^{\rho-1} E_{\rho, \rho} (-\lambda_k  \eta^\rho )d\eta  \bigg ]^2 \]
   (apply Lemma \ref{MLint} to get )
    \[ \leq	C \sum\limits_{k=1}^\infty \bigg[  \lambda_k^{\sigma+1}|f_k| \max\limits_{0\leq t\leq T}|g(t)|
    t^{\rho} E_{\rho, \rho+1} (-\lambda_k  t^\rho ) \bigg ]^2, \quad t>0. \]

    Lemma  \ref{mll4} implies
    \[||-\Delta S_j(x,t)||_{C(\Omega)}\leq	C\max\limits_{0\leq t\leq T}|g(t)|||f||_{\sigma}, \quad t > 0.\]

Hence $ -\Delta \omega(x,t) \in  C(\overline{\Omega}\times(0,T])$ and in particular
$ \omega(x,t) \in  C(\overline{\Omega}\times[0,T])$.
Further, from equation (\ref{prob6}) one has $ D_t^\rho S_j(x,t)=-\Delta S_j(x,t)+\sum\limits_{k=1}^j f_k g(t)v_k(x)$, $ t> 0$. Therefore, from
the above reasoning, we have $ D_t^\rho \omega(x,t) \in  C(\overline{\Omega}\times(0,T])$.

Again the uniqueness of the solution follows from the completeness of the system $\{v_k(x)\}$ in $L_2(\Omega)$.

\end{proof}

Now let us move on to solving the main problem (\ref{prob1}).  Note, if $y(x, t)$ and $\omega(x,t)$ are the solutions of problems (\ref{prob.y}) and (\ref{prob6}) correspondingly, then function $ u(x, t) = y(x,t) + \omega(x,t)$ is a solution to problem (\ref{prob1}). Therefore, we can use the already proven assertions and obtain the following result.

\begin{thm}\label{mainForward}
Let $\varphi (x)$, $f(x)$ satisfy conditions (\ref{ml3}) and $g(t)\in C[0, T]$. Then problem (\ref{prob6}) has a unique solution
\begin{equation}\label{probMain}
u(x,t)=\sum\limits_{k=1}^\infty \left [\varphi_k E_\rho(-\lambda_k t^\rho )+f_k \int\limits_0^t \eta^{\rho-1} E_{\rho, \rho} (-\lambda_k  \eta^\rho ) g(t-\eta) d\eta \right]  v_k(x).
\end{equation}
\end{thm}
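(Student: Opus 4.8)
The plan is to obtain $u$ by superposition. I would set $u(x,t)=y(x,t)+\omega(x,t)$, where $y$ is the solution of the homogeneous problem (\ref{prob.y}) with initial datum $\varphi$ and $\omega$ is the solution of the inhomogeneous problem (\ref{prob6}) with zero initial datum. Since $\varphi$ satisfies (\ref{ml3}), Theorem \ref{main} applies and gives $y$ in the form (\ref{prob.s}); since $f$ satisfies (\ref{ml3}) and $g\in C[0,T]$, Theorem \ref{main1} applies and gives $\omega$ in the form (\ref{prob4}). Adding the two series produces exactly the representation (\ref{probMain}).

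Next I would verify that this $u$ is a solution in the sense of Definition \ref{def}. From the two cited theorems one has $\Delta y,\ D_t^\rho y,\ \Delta\omega,\ D_t^\rho\omega\in C(\overline{\Omega}\times(0,T])$ and $y,\omega\in C(\overline{\Omega}\times[0,T])$, so $u$ inherits the regularity required in items (1)--(2) of Definition \ref{def}, while $f\in C(\overline{\Omega})$ is part of the hypothesis. The equation holds because $D_t^\rho u-\Delta u=(D_t^\rho y-\Delta y)+(D_t^\rho\omega-\Delta\omega)=0+f(x)g(t)$; the Dirichlet boundary condition holds since each summand vanishes on $\partial\Omega$; and the initial condition holds since $u(x,0)=y(x,0)+\omega(x,0)=\varphi(x)+0=\varphi(x)$, the continuity of $u$ up to $t=0$ making this identity meaningful.

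For uniqueness I would take two solutions $u_1,u_2$ of the problem; their difference $w=u_1-u_2$ then satisfies (\ref{prob.y}) with $\varphi\equiv 0$ and has all the regularity demanded in Definition \ref{def}. By the uniqueness part of Theorem \ref{main} --- equivalently, by expanding $w(\cdot,t)$ in the complete orthonormal system $\{v_k\}$, so that each coefficient $w_k(t)=(w(\cdot,t),v_k)$ solves the scalar fractional relaxation problem $D_t^\rho w_k(t)+\lambda_k w_k(t)=0$, $w_k(0)=0$, whose only solution is $w_k\equiv 0$ --- we conclude $w\equiv 0$, and hence the solution is unique.

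I do not anticipate a genuine obstacle here: the statement is a direct consequence of Theorems \ref{main} and \ref{main1} together with the linearity of $D_t^\rho$ and $\Delta$. The one point that requires a little care, namely that the term-by-term application of $\Delta$ and $D_t^\rho$ and the passage to the limit are legitimate on $\overline{\Omega}\times(0,T]$, has already been established inside the proofs of those two theorems and so needs only to be cited.
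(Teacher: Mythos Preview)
Your proposal is correct and follows exactly the paper's approach: the paper proves this theorem simply by noting that if $y$ and $\omega$ solve problems (\ref{prob.y}) and (\ref{prob6}) respectively, then $u=y+\omega$ solves (\ref{prob1}), so the result follows directly from Theorems \ref{main} and \ref{main1}. Your write-up is in fact more detailed than the paper's, which relegates the regularity and uniqueness verifications to the cited theorems without spelling them out.
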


\section{Well-posedness of the inverse problem (\ref{prob1}), (\ref{ad})}
We apply the additional condition (\ref{ad})  to equation (\ref{probMain}) and denote by $\psi_k$ the Fourier coefficients of  function $\psi(x): \psi_k  = (\psi, v_k )$. Then
\begin{equation}\label{EqFor_fk1}
\sum\limits_{k=1}^\infty f_k b_{k,\rho}(t_0)v_k(x)=
\sum\limits_{k=1}^\infty  \psi_k  v_k(x)-\sum\limits_{k=1}^\infty  \varphi_kE_\rho(-\lambda_k  t_0)  v_k(x),
\end{equation}   
where
\[
b_{k,\rho}(t)=\int\limits_0^{t} (t-s)^{\rho-1} E_{\rho, \rho} (-\lambda_k  (t-s)^\rho ) g(s) ds.
\]

From here, to find $f_k$, we obtain the following equation
\begin{equation}\label{EqFor_fk2}
f_k b_{k,\rho}(t_0)=
  \psi_k  -  \varphi_kE_\rho(-\lambda_k  t_0).
\end{equation}  
Of course the case $b_{k,\rho}(t_0)=0$ is critical. This can happen when $g(t)$ changes sign. The following example shows that for such $g(t)$  the uniqueness of the unknowns $f_k$ can be violated (see also \cite{MS1}).

 Example 1.  Consider the following  homogeneous inverse problem
 \begin{equation}\label{prob13}
\left\{
\begin{aligned}
& D_t^\rho u(x,t)-\Delta u(x,t) =f(x)g(t),\quad (x,t)\in \Omega \times(0,T],\\
&u(x,t)|_{\partial\Omega}=0,  \\
&u(x,0)=0, \quad x \in \Omega, \\
&u (x,t_0) = 0, \quad x\in \Omega.
\end{aligned}
\right.
\end{equation}  
 Take any eigenfunction $v$ of the  Laplace operator subject to homogeneous Dirichlet boundary
conditions, i.e.
$- \Delta v = \lambda v$ with $v(x)|_{\partial{\Omega}}=0$ and set $t_0 = 1$, $T(t) =t^\rho (1-t^b)$, $b>0$. Then, $u(x, t) = T(t)v(x)$ satisfies problem (\ref{prob13}) with
\[
f(x)= v(x) \,\,\,\text{and} \,\,\, g(t)=D_t^\rho T(t)+\lambda T(t).
\]
Then, besides the trivial solution $(u, f ) = (0, 0)$ to problem  (\ref{prob13}), we also have the following non-trivial solution
\[
u(x, t) = T(t)v(x),\,\, f(x)=v(x).
\]
It can be easily shown that, for example, for the parameters $b=0.1$ and $\rho=0.5$, the function $g(t)$ changes its sign.
Indeed, one has
\[
g(t) =\frac{\rho B(\rho,1-\rho)}{\Gamma(1-\rho)}-\frac{(b+\rho)t^\rho B(b+\rho,1-\rho)}{\Gamma(1-\rho)}+\lambda t^\rho (1-t^b),
\] 
and
\[
g(0) =0.5\Gamma(0.5)=\frac{\sqrt{\pi}}{2}> 0,\]

\[
g(1)=0.5\Gamma(0.5)-\frac{0.6 B(0.6,0.5)}{\Gamma(0.5)}=\frac{\sqrt{\pi}}{2}-\frac{ 6\Gamma(0.6)}{\Gamma(1.1)} < 0.
\] 

Let us divide the set of natural numbers $\mathbb{N}$ into two groups $K_{0,\rho}$ and $K_\rho$: $\mathbb{N}=K_\rho \cup K_{0,\rho}$, while the number $k$ is assigned to $K_{0,\rho}$, if $b_{k,\rho}(t_0)=0$, and if $b_{k,\rho}(t_0)\neq 0$, then this number is assigned to $K_\rho$. Note that for some $t_0$ the set $K_{0,\rho}$ may be empty, then $K_\rho=\mathbb{N}$. For example, if $g(t)$ is sign-preserving, then $K_\rho=\mathbb{N}$, for all $t_0$. 

The question naturally arises about the size of set $K_{0,\rho}$, i.e., how many elements does $K_{0,\rho}$  contain? As the authors of the paper \cite{Tix} noted, at least for $\rho=1$, the set $K_{0,1}$ can contain infinitely many elements. Indeed, in this case 
\[
b_{k,1}(t_0)=\int\limits_0^{t_0} e^{-\lambda_k  (t_0-s)} g(s) ds,
\]
and according to Müntz's theorem (see the monograph by S. Kaczmarz and H. Steinhouse \cite{Kac}, p. 103),  the set $K_{0,1}$  for some continuous functions $g(t)$ contains infinitely many elements (see also \cite{Le}, p. 107).

In the case of the diffusion equation, the criterion for the uniqueness of a solution of the inverse problem was studied in the papers cited above \cite{Pr}, \cite{Sab}, \cite{Sab2}, \cite{Orl}, \cite{Tix}. This criterion can be formulated as follows: The inverse problem has a unique solution if and only if: 
\begin{equation}\label{criterion1}
b_{k,1}(t_0)\neq 0.
\end{equation}

From equation (\ref{EqFor_fk2}) for finding $f_k$ it easily follows that the criterion for the uniqueness of the solution of the inverse problem for the subdiffusion equation has a similar form:
\begin{equation}\label{criterion}
b_{k,\rho}(t_0)\neq 0.
\end{equation}

	Let us establish two-sided estimates for $b_{k,\rho}(t_0)$ .
	First we suppose that $g(t)$ does not change sign (for the diffusion equation, i.e. for $b_{k,1}(t_0)$, see Sabitov et al. \cite{Sab}, \cite{Sab2}). Then $K_{0,\rho}$ is empty.
	
	\begin{lem}\label{invvv1} Let $g(t)\in C[0,T]$ and $g(t)\neq 0$, $t\in [0,T]$. Then there are constants $C_0,C_1>0$, depending on $t_0$, such that for all $k$:
		\[
		\frac{C_0}{\lambda_k}\leq |b_{k,\rho}(t_0)|\leq\frac{C_1}{\lambda_k}.
		\]
	\end{lem}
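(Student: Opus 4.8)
The plan is to analyze the quantity
\[
b_{k,\rho}(t_0)=\int\limits_0^{t_0} (t_0-s)^{\rho-1} E_{\rho, \rho} (-\lambda_k  (t_0-s)^\rho ) g(s)\, ds
\]
by combining the asymptotic behaviour of the Mittag-Leffler kernel (Lemma \ref{ml8}) with a Laplace-type analysis of the convolution integral, splitting the argument into the regime of large $\lambda_k$ (asymptotic estimate) and the finitely many small $\lambda_k$ (where positivity of the kernel and the constant sign of $g$ force $b_{k,\rho}(t_0)\neq 0$). Since both bounds are homogeneous of order $\lambda_k^{-1}$, it suffices to prove $\lambda_k |b_{k,\rho}(t_0)|\to c>0$ (or at least stays in a compact subinterval of $(0,\infty)$) as $k\to\infty$, and then handle the finitely many remaining indices separately.

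First I would substitute $\tau=(t_0-s)$ so that
\[
b_{k,\rho}(t_0)=\int\limits_0^{t_0} \tau^{\rho-1} E_{\rho, \rho} (-\lambda_k  \tau^\rho ) g(t_0-\tau)\, d\tau,
\]
and then rescale by $\tau=\lambda_k^{-1/\rho}\xi$. This turns the integral into
\[
b_{k,\rho}(t_0)=\lambda_k^{-1}\int\limits_0^{\lambda_k^{1/\rho} t_0} \xi^{\rho-1} E_{\rho,\rho}(-\xi^\rho)\, g\!\left(t_0-\lambda_k^{-1/\rho}\xi\right) d\xi .
\]
By Lemma \ref{ml8}, $\xi^{\rho-1}E_{\rho,\rho}(-\xi^\rho)= \xi^{-\rho-1}/\Gamma(\rho - \rho) + O(\xi^{-2\rho-1})$ — here one must be slightly careful because $\Gamma(0)$ is singular, so I would instead use the cleaner identity $E_{\rho,\rho}(-\xi^\rho)\xi^{\rho-1} = -\frac{d}{d\xi}E_{\rho,1}(-\xi^\rho)/\rho \cdot \ldots$; concretely, the known relation $\int_0^\infty \xi^{\rho-1}E_{\rho,\rho}(-\xi^\rho)\,d\xi = 1$ (which follows from Lemma \ref{MLint} with $\mu=1$, $t\to\infty$, together with Lemma \ref{mll4}) shows $\xi^{\rho-1}E_{\rho,\rho}(-\xi^\rho)$ is an integrable kernel of total mass $1$ concentrated near $\xi=0$ after rescaling. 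Hence, by continuity of $g$ at $t_0$ and dominated convergence (the domination coming from Lemma \ref{mll4}, $|\xi^{\rho-1}E_{\rho,\rho}(-\xi^\rho)| \le C\xi^{\rho-1}/(1+\xi^\rho)$, which is integrable on $(0,\infty)$ for $0<\rho<1$),
\[
\lambda_k\, b_{k,\rho}(t_0)\ \longrightarrow\ g(t_0)\int\limits_0^\infty \xi^{\rho-1} E_{\rho,\rho}(-\xi^\rho)\, d\xi = g(t_0)\quad\text{as } k\to\infty .
\]
Since $g$ is of constant sign on $[0,T]$ we have $g(t_0)\neq 0$, so $\lambda_k|b_{k,\rho}(t_0)|\to |g(t_0)|>0$; in particular there are $C_0',C_1'>0$ and an index $k_0$ with $C_0'/\lambda_k \le |b_{k,\rho}(t_0)|\le C_1'/\lambda_k$ for all $k\ge k_0$.

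For the finitely many indices $k<k_0$, the kernel $(t_0-s)^{\rho-1}E_{\rho,\rho}(-\lambda_k(t_0-s)^\rho)$ is strictly positive on $(0,t_0)$ (by Lemma \ref{MLmonoton}-type positivity of $E_{\rho,\rho}$ of negative argument, or directly from the series/complete-monotonicity of the kernel), and $g$ does not change sign and is not identically zero, so $b_{k,\rho}(t_0)\neq 0$ for each such $k$; taking the minimum of the finitely many nonzero values $\lambda_k|b_{k,\rho}(t_0)|$ and adjusting $C_0',C_1'$ gives uniform constants $C_0,C_1>0$ valid for all $k$. The main obstacle I anticipate is the careful justification of the limit and the uniform lower bound: one must control the tail of the rescaled integral (where $g(t_0-\lambda_k^{-1/\rho}\xi)$ is evaluated near $s=0$) and rule out cancellation; the constant-sign hypothesis on $g$ is exactly what prevents cancellation, but turning this into a clean, explicit lower bound (rather than a mere nonvanishing statement) requires estimating $\int_0^{t_0}(t_0-s)^{\rho-1}E_{\rho,\rho}(-\lambda_k(t_0-s)^\rho)\,ds = t_0^\rho E_{\rho,\rho+1}(-\lambda_k t_0^\rho)$ from below via Lemma \ref{MLint} and Lemma \ref{ml8}, which yields a lower bound of order $\lambda_k^{-1}$, and combining it with $|g(s)|\ge \min_{[0,T]}|g|>0$.
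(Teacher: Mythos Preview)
Your asymptotic/rescaling strategy is sound in spirit, but there is a concrete slip: the dominating function you name, $C\xi^{\rho-1}/(1+\xi^\rho)$, is \emph{not} integrable on $(0,\infty)$ --- under $u=\xi^\rho$ it becomes $C\rho^{-1}\int_0^\infty (1+u)^{-1}du$, which diverges logarithmically. Dominated convergence can be rescued, since the sharper asymptotic in Lemma~\ref{ml8} gives $E_{\rho,\rho}(-t)=O(t^{-2})$ (the leading term $t^{-1}/\Gamma(0)$ vanishes), so in fact $\xi^{\rho-1}E_{\rho,\rho}(-\xi^\rho)=O(\xi^{-\rho-1})$ at infinity, and that \emph{is} integrable. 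With this correction your argument that $\lambda_k b_{k,\rho}(t_0)\to g(t_0)$ goes through, and the splitting into large $k$ versus finitely many small $k$ is then fine.

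That said, the paper's proof is considerably shorter and avoids asymptotics, limits, and case-splitting entirely. Because $g$ has constant sign and the kernel $\eta^{\rho-1}E_{\rho,\rho}(-\lambda_k\eta^\rho)$ is nonnegative, the integral mean value theorem gives
\[
|b_{k,\rho}(t_0)|=|g(\xi_k)|\int_0^{t_0}\eta^{\rho-1}E_{\rho,\rho}(-\lambda_k\eta^\rho)\,d\eta
=|g(\xi_k)|\,t_0^\rho E_{\rho,\rho+1}(-\lambda_k t_0^\rho)
\]
for some $\xi_k\in[0,t_0]$, using Lemma~\ref{MLint}. Then the elementary identity $E_{\rho,\rho+1}(-t)=t^{-1}(1-E_\rho(-t))$ together with Lemma~\ref{MLmonoton} (for the lower bound, via $|g(\xi_k)|\ge g_0>0$) and Lemma~\ref{mll4} (for the upper bound) gives both inequalities for \emph{all} $k$ at once. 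This is essentially the ``cleaner'' route you sketch in your final sentence; the paper takes it from the start and never needs the rescaling, the dominated-convergence step, or the separate treatment of small $k$. Your approach has the merit of giving the explicit limit $\lambda_k b_{k,\rho}(t_0)\to g(t_0)$, but for the stated two-sided bound the direct argument is both shorter and free of the integrability subtlety that tripped you up.
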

\begin{proof} By virtue of the Weierstrass theorem, we have $|g(t)|\geq g_0=const >0$. Apply the mean value theorem and Lemma \ref{MLint} to obtain 
	\[
	|b_{k,\rho}(t_0)| =\bigg|\int\limits _0^{t_0} \eta^{\rho-1} E_{\rho, \rho} (-\lambda_k  \eta^\rho)g(t_0-\eta)d\eta\bigg|=
	\]
	\[=|g(\xi_k)| t_0^\rho E_{\rho, \rho+1} (-\lambda_k t_0^\rho ) , \quad \xi_k\in[0,t_0].\]
 It is easy to see, that
 \[
  E_{\rho, \rho+1}(-t)=t^{-1}(1-E_\rho(-t)).
 \]
	Therefore, using Lemma \ref{MLmonoton} and the estimate $|g(t)|\geq g_0$ one has
 \[
	|b_{k,\rho}(t_0)|=|g(\xi_k)|\frac{1}{\lambda_k }(1-E_\rho(-\lambda_kt_0^\rho)\geq  \frac{C_0}{\lambda_k}.
	\]
	
	Finally Lemma \ref{mll4} implies
\[
	|b_{k,\rho}(t_0)|\leq C\frac{|g(\xi_k)|t_0^\rho}{1+\lambda_k t_0^\rho} \leq C\frac{ \max\limits_{0\leq\xi \leq t_0}|g(\xi)|}{\lambda_k}\leq \frac{C_1}{\lambda_k}.
	\]	
\end{proof}

\begin{thm}\label{thmNotChange}Let $\rho\in (0,1]$, $g(t)\in C[0,T]$ and $g(t)\neq 0$, $t\in [0,T]$. Moreover let function $\varphi(x)$ satisfy condition  (\ref{ml3}) and $\psi(x)$ satisfy condition (\ref{ml4}). Then there exists a unique solution of the inverse problem (\ref{prob1})-(\ref{ad}):
		\begin{equation}\label{f_NotChange}
			f(x)=\sum\limits_{k=1}^\infty \frac{1}{b_{k,\rho}(t_0)}\left[ \psi_k-\varphi_k E_\rho(-\lambda_k  t_0)\right]v_k(x),
		\end{equation}
		\begin{equation}\label{u_NotChange}
			u(x,t)=\sum\limits_{k=1}^\infty\varphi_k E_{\rho}(-\lambda_kt^\rho)v_k(x)+\sum\limits_{k=1}^\infty \frac{b_{k,\rho}(t)}{b_{k,\rho}(t_0)}\left[ \psi_k-\varphi_k E_\rho(-\lambda_k  t_0)\right]v_k(x).
		\end{equation}
\end{thm}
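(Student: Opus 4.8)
The plan is to solve equation (\ref{EqFor_fk2}) coefficient by coefficient, then to show that the resulting series (\ref{f_NotChange}) and (\ref{u_NotChange}) define a pair $\{u,f\}$ of the regularity required by Definition \ref{def}, and finally to establish uniqueness by projecting the homogeneous inverse problem onto the eigenfunctions $v_k$.

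For existence, note that since $g$ does not change sign, Lemma \ref{invvv1} gives $b_{k,\rho}(t_0)\neq 0$ and $|b_{k,\rho}(t_0)|\geq C_0/\lambda_k$ for every $k$, so (\ref{EqFor_fk2}) determines $f_k$ uniquely and yields (\ref{f_NotChange}). Combining this lower bound with the estimate $E_\rho(-\lambda_k t_0)\leq C(1+\lambda_k t_0)^{-1}$ from Lemma \ref{mll4}, one obtains $|f_k|^2\leq C(\lambda_k^2|\psi_k|^2+|\varphi_k|^2)$, hence for any $\sigma>N/4$
\[
\sum_{k=1}^\infty\lambda_k^{2\sigma}|f_k|^2\leq C\Big(\sum_{k=1}^\infty\lambda_k^{2\sigma+2}|\psi_k|^2+\sum_{k=1}^\infty\lambda_k^{2\sigma}|\varphi_k|^2\Big)<\infty,
\]
the first series on the right being finite because $\psi$ satisfies (\ref{ml4}), the second because $\varphi$ satisfies (\ref{ml3}). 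By Lemma \ref{ml1} this already gives $f\in C(\overline{\Omega})$. I would then set $u=y+\omega$, where $y$ is the solution of (\ref{prob.y}) furnished by Theorem \ref{main} and $\omega$ is the solution of (\ref{prob6}) with right-hand side $f(x)g(t)$ furnished by Theorem \ref{main1} --- the bound $\sum\lambda_k^{2\sigma}|f_k|^2<\infty$ with $\sigma>N/4$ being exactly what the proof of Theorem \ref{main1} uses --- so that $u$ has the properties listed in Definition \ref{def} and satisfies (\ref{prob1}). Finally, since $b_{k,\rho}(t_0)/b_{k,\rho}(t_0)=1$ and $\varphi_k E_\rho(-\lambda_k t_0)+f_k b_{k,\rho}(t_0)=\psi_k$ by (\ref{EqFor_fk2}), and since both constituent series converge uniformly, one gets $u(x,t_0)=\sum_k\psi_k v_k(x)=\psi(x)$, the last equality coming from completeness of $\{v_k\}$ and the uniform convergence of the Fourier expansion of $\psi$, guaranteed by (\ref{ml4}).

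For uniqueness, let $\{u,f\}$ solve the homogeneous inverse problem (\ref{prob13}) and set $u_k(t)=(u(\cdot,t),v_k)$, $f_k=(f,v_k)$. Since $\Delta u(\cdot,t)$ is continuous up to $\partial\Omega$ and $u$ vanishes there, Green's formula gives $(\Delta u(\cdot,t),v_k)=-\lambda_k u_k(t)$; likewise the continuity of $D_t^\rho u$ up to the boundary allows $D_t^\rho$ to be moved through the inner product, so $(D_t^\rho u(\cdot,t),v_k)=D_t^\rho u_k(t)$. Projecting (\ref{prob13}) onto $v_k$ yields the scalar problem $D_t^\rho u_k(t)+\lambda_k u_k(t)=f_k g(t)$, $u_k(0)=0$, whose unique solution is $u_k(t)=f_k b_{k,\rho}(t)$. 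The over-determination condition then forces $0=u_k(t_0)=f_k b_{k,\rho}(t_0)$, and since $b_{k,\rho}(t_0)\neq 0$ by Lemma \ref{invvv1}, we conclude $f_k=0$ for all $k$; hence $f\equiv 0$ by completeness of $\{v_k\}$ in $L_2(\Omega)$, whence $u_k\equiv 0$ for all $k$ and so $u\equiv 0$.

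The main obstacle is the existence step: one must be sure that dividing by $b_{k,\rho}(t_0)$, which decays like $\lambda_k^{-1}$ by Lemma \ref{invvv1}, does not destroy the convergence of the series defining $f$ and $u$. This is precisely why two extra orders of regularity are imposed on $\psi$ in (\ref{ml4}) relative to (\ref{ml3}); for the $\varphi$-contribution no additional smoothness is needed because the factor $E_\rho(-\lambda_k t_0)$ supplies the compensating decay. A minor technical point is the justification of $(D_t^\rho u(\cdot,t),v_k)=D_t^\rho u_k(t)$ in the uniqueness argument, which rests on the continuity of all the derivatives occurring in the equation up to $\partial\Omega$ built into Definition \ref{def}.
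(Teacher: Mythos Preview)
Your proposal is correct and follows essentially the same route as the paper: derive $f_k$ from (\ref{EqFor_fk2}) via the lower bound of Lemma \ref{invvv1}, control $\sum\lambda_k^{2\sigma}|f_k|^2$ by splitting into the $\psi$-part (which absorbs the extra $\lambda_k^2$ thanks to (\ref{ml4})) and the $\varphi$-part (where $E_\rho(-\lambda_k t_0)$ supplies the compensating decay via Lemma \ref{mll4}), then appeal to the arguments of Theorems \ref{main} and \ref{main1} for the regularity of $u$; uniqueness is obtained, as in the paper, by projecting the homogeneous problem onto $v_k$ and using $b_{k,\rho}(t_0)\neq 0$. The only cosmetic difference is that the paper carries out the estimate on the partial sums $F_j$ of (\ref{f_NotChange}) directly, whereas you first bound $|f_k|$ and then invoke the proofs of the forward theorems; the substance is identical.
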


For the diffusion equation ($\rho=1$), this theorem is proved only in cases where $\Omega$ is an interval on $\mathbb{R}$ (see \cite{Sab}) or a rectangle on $\mathbb{R}^2$  (see \cite{Sab2}). It is a new theorem for subdiffusion equations ($\rho\in (0,1)$).
\begin{proof} 

Since $b_{k,\rho}(t_0)\neq 0 $ for all $ k \in \mathbb{N}$, then we get the following equations from (\ref{EqFor_fk2}):
\begin{equation}\label{inv4}
f_k=\frac{1}{b_{k,\rho}(t_0)}\left[ \psi_k-\varphi_k E_\rho(-\lambda_k  t_0)\right],
\end{equation}
\begin{equation}\label{inv5}
u_k(t)=\varphi_k E_{\rho}(-\lambda_kt^\rho)+\frac{b_{k,\rho}(t)}{b_{k,\rho}(t_0)}\left[ \psi_k-\varphi_k E_\rho(-\lambda_k  t_0)\right].
\end{equation}

With these Fourier coefficients, we have the following series for the unknown functions $f(x)$ and $u(x,t)$:
\begin{equation}\label{inv10}
f(x)=\sum\limits_{k=1}^\infty \frac{1}{b_{k,\rho}(t_0)}\left[ \psi_k-\varphi_k E_\rho(-\lambda_k  t_0)\right]v_k(x)=\sum\limits_{k=1}^\infty [f_{k,1}+f_{k,2}]v_k(x),
\end{equation}
\begin{equation}\label{inv11}
u(x,t)=\sum\limits_{k=1}^\infty\varphi_k E_{\rho}(-\lambda_kt^\rho)v_k(x)+\sum\limits_{k=1}^\infty \frac{b_{k,\rho}(t)}{b_{k,\rho}(t_0)}\left[ \psi_k-\varphi_k E_\rho(-\lambda_k  t_0)\right]v_k(x).
\end{equation}
If $F_j(x)$ is the partial sums of series (\ref{inv10}), then applying Lemma \ref{ml1}   as above, we have
\begin{equation}\label{inv6}
||\hat{A}^{-\sigma}F_j(x)||_{C(\Omega)}^2\leq \sum\limits_{k=1}^j \lambda_k^{2\sigma }|f_{k,1}+f_{k,2}|^2 \leq 2\sum\limits_{k=1}^j\lambda_k^{2\sigma }f_{k,1}^2+2\sum\limits_{k=1}^j\lambda_k^{2\sigma }f_{k,2}^2 \equiv 2I_{1,j}+2I_{2,j},
\end{equation}
where $\sigma > \frac{N}{4}$. Therefore by 
Lemma  \ref{invvv1} one has
\begin{equation}\label{inv7}
I_{1,j}\leq \sum\limits_{k=1}^j \frac{\lambda_k^{2\sigma}}{|b_{k,\rho}(t_0)|^2} |\psi_k|^2 \leq C \sum\limits_{k=1}^j \lambda_k^{\tau+2}|\psi_k|^2, \quad \tau=2\sigma>\frac{N}{2},
\end{equation}
\begin{equation}\label{inv27}
I_{2,j}\leq \sum\limits_{k=1}^j \left|\frac{E_\rho(-\lambda_k  t_0)}{b_{k,\rho}(t_0)}\right|^2\lambda_k^{2\sigma} |\varphi_k |^2 \leq C \sum\limits_{k=1}^j \lambda_k^{\tau}|\varphi_k|^2, \quad \tau=2\sigma>\frac{N}{2}.
\end{equation}

Thus, if $\varphi(x)$ satisfies conditions  (\ref{ml3}) and $\psi (x)$ satisfies conditions (\ref{ml4}), then from estimates of $I_{i,j}$ and (\ref{inv6}) we obtain $f(x) \in C(\overline{\Omega})$. Further, the fact that function $u(x,t)$ given by  (\ref{inv11}) is a solution to the inverse problem is proved exactly as the proof of Theorem \ref{mainForward}.

 To prove the uniqueness of the solution, assume the contrary, i.e., there are two different solutions $\{u_1,f_1\}$ and $\{u_2,f_2\}$ satisfying the inverse problem (\ref{prob1} )-(\ref{ad}). We need to show that $u\equiv u_1-u_2 \equiv 0$, $f\equiv f_1-f_2\equiv 0$. For $\{u,f\}$ we have the following problem:
 \begin{equation}\label{prob20}
\left\{
\begin{aligned}
& D_t^\rho u(x,t)-\Delta u(x,t) =f(x)g(t),\quad (x,t)\in \Omega \times(0,T],\\
&u(x,t)|_{\partial\Omega}=0,  \\
&u(x,0)=0, \quad x \in \Omega,\\
&u(x,t_0)=0, \quad x \in \Omega, \quad t_0 \in (0,T].
\end{aligned}
\right.
\end{equation} 
We take any solution $\{u,f\}$ and define $u_k=(u,v_k)$ and $f_k=(f,v_k)$. Then, due to the self-adjointness of the operator $ -\Delta$ and continuity of the derivatives of the solution up to the boundary of the domain $\Omega$, we have
\[
D_t^\rho u_k(t)= (D_t^\rho u, v_k)= (\Delta u, v_k)+f_k g(t)=( u,\Delta v_k)+f_k g(t)=-\lambda_k u_k(t)+f_k g(t).
\]
Therefore, for $u_k$ we obtain the Cauchy problem
\[
D_t^\rho u_k(t)+\lambda_k u_k(t) =f_kg(t),\quad t>0,\quad u_k(0)=0, 
\]
and the additional condition
\[
\quad u_k(t_0)=0.
\]
If $f_k$ is known, then the unique solution of the Cauchy problem has the form
\[
u_k(t)= f_k\int\limits_0^t \eta^{\rho-1} E_{\rho, \rho} (-\lambda_k \eta^\rho) g(t-\eta) d\eta=f_kb_{k,\rho}(t).
\]
Apply the additional condition to get
\[
u_k(t_0)= f_kb_{k,\rho}(t_0)=0.
\]
Since $b_{k,\rho}(t_0) \neq 0$ for all $k \in \mathbb{N} $, then due to completeness of the set of eigenfunctions $\{v_k\}$ in $L_2(\Omega)$, we finally have  $f(x)\equiv 0$ and  $u(x,t)\equiv0$. \end{proof} 

Now consider the case when $g(t)$ changes sign. In this case, function $b_{k,\rho}(t_0)$ can
become zero, and as a result, the set $K_{0, \rho}$ may turn out to be non-empty. Now we should consider separately the case of diffusion ($\rho=1$) and subdiffusion ($0<\rho<1$) equations.
	\begin{lem}\label{lemmaClassic}
	Let $\rho=1$, $g(t)\in C^1[0, T]$ and $g(t_0)\neq 0$. Then there are constants $C_0,C_1>0$, depending on $t_0$, such that for all $k\in K_1$ one has
		\[
	\frac{C_0}{\lambda_k}\leq |b_{k,1}(t_0)|\leq\frac{C_1}{\lambda_k}.
	\]
	\end{lem}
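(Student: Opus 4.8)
The plan is to exploit the fact that for $\rho=1$ the Mittag-Leffler kernel degenerates to an exponential. Since $E_{1,1}(z)=e^z$ and $(t_0-s)^{\rho-1}\equiv 1$, we have
\[
b_{k,1}(t_0)=\int\limits_0^{t_0} e^{-\lambda_k(t_0-s)}g(s)\,ds=\int\limits_0^{t_0} e^{-\lambda_k\eta}\,g(t_0-\eta)\,d\eta ,
\]
after the substitution $\eta=t_0-s$. Because $g\in C^1[0,T]$, the map $\eta\mapsto g(t_0-\eta)$ is continuously differentiable, so the first step is to integrate by parts once, which gives
\[
b_{k,1}(t_0)=\frac{g(t_0)}{\lambda_k}-\frac{e^{-\lambda_k t_0}}{\lambda_k}\,g(0)-\frac{1}{\lambda_k}\int\limits_0^{t_0}e^{-\lambda_k\eta}\,g'(t_0-\eta)\,d\eta .
\]
The term coming from the lower endpoint $\eta=0$ produces the leading contribution $g(t_0)/\lambda_k$, which is nonzero by hypothesis; this is the term that will yield both the upper and the lower estimate.

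The second step is to bound the remainder. The term $\lambda_k^{-1}e^{-\lambda_k t_0}g(0)$ is exponentially small, and bounding $g'$ by $\max\limits_{0\leq t\leq T}|g'(t)|$ gives
\[
\Bigl|\frac{1}{\lambda_k}\int\limits_0^{t_0}e^{-\lambda_k\eta}g'(t_0-\eta)\,d\eta\Bigr|\leq \frac{\max\limits_{0\leq t\leq T}|g'(t)|}{\lambda_k}\cdot\frac{1-e^{-\lambda_k t_0}}{\lambda_k}\leq\frac{C}{\lambda_k^{2}},
\]
with $C=C(g,t_0)$. Hence there is a constant $C_2=C_2(g,t_0)$ with $\bigl|\,b_{k,1}(t_0)-g(t_0)/\lambda_k\,\bigr|\leq C_2\lambda_k^{-2}$ for all $k$; equivalently $\lambda_k\,b_{k,1}(t_0)\to g(t_0)\neq 0$.

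The third step is to convert this into the claimed two-sided bound. Choosing $\Lambda_0$ so that $C_2/\Lambda_0\leq|g(t_0)|/2$ and recalling $\lambda_k\to\infty$, all but finitely many $k$ satisfy $\lambda_k\geq\Lambda_0$, and for those
\[
\frac{|g(t_0)|}{2\lambda_k}\leq |b_{k,1}(t_0)|\leq\frac{3|g(t_0)|}{2\lambda_k};
\]
in particular these indices automatically lie in $K_1$. For the finitely many remaining $k\in K_1$ with $\lambda_k<\Lambda_0$ one has $b_{k,1}(t_0)\neq 0$ by the definition of $K_1$ and $\lambda_k$ bounded, so $|b_{k,1}(t_0)|$ and $\lambda_k|b_{k,1}(t_0)|$ lie in a finite set of strictly positive numbers; taking the minimum and the maximum over this finite set and combining with the previous display produces constants $C_0,C_1>0$ depending on $t_0$ (and $g$) for which $C_0/\lambda_k\leq |b_{k,1}(t_0)|\leq C_1/\lambda_k$ holds for every $k\in K_1$.

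The calculations are routine; the point requiring attention is the last step, namely the observation that restricting to $k\in K_1$ is harmless because the asymptotics push all but finitely many indices into $K_1$, with the finite exceptional set affecting only the values of $C_0$ and $C_1$. The hypothesis $g(t_0)\neq 0$ is used exactly where the leading coefficient $g(t_0)/\lambda_k$ of the expansion must not vanish; without it a lower bound of the form $C_0/\lambda_k$ would fail. Note that, in contrast to the proof of Lemma \ref{invvv1}, here $g$ may change sign on $[0,T]$, so the mean-value-theorem argument used there is unavailable and one integrates by parts instead, at the cost of assuming $g\in C^1[0,T]$.
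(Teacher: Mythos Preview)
Your proof is correct and follows essentially the same route as the paper: one integration by parts in the exponential integral isolates the leading term $g(t_0)/\lambda_k$, and the remainder is $O(\lambda_k^{-2})$ (the paper applies the mean value theorem to the remaining integral, while you bound it directly, which is equivalent). Your treatment of the finitely many low indices via the definition of $K_1$ is more explicit than the paper's, which simply remarks that the bracket $g(t_0)-g(0)e^{-\lambda_k t_0}$ is eventually nonzero and then asserts the lower bound.
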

 \begin{proof}By integrating by parts and  the mean value theorem, we get
		\[
		b_{k,1}(t_0)=\int\limits_0^{t_0} e^{-\lambda_k  s} g(t_0-s)ds  =-\frac{1}{\lambda_k}g(t_0-s)  e^{-\lambda_k  s}\bigg|^{t_0}_0 - \frac{1}{\lambda_k}\int\limits_0^{t_0} e^{-\lambda_k  s} g'(t_0-s)ds = 
		\]
		\[
		=\frac{1}{\lambda_k} \big[g(t_0)-g(0)e^{-\lambda_k t_0}\big] +  \frac{g'(\xi_k)}{\lambda^2_k}\big[e^{-\lambda_k t_0}
-1\big], \quad \xi_k\in [0, t_0].		
\]
The first square bracket cannot vanish starting from some number $k$. Therefore, there exists a constant $C_0$ such that the required lower bound holds.

The upper estimate follows from the boundedness of function $g(t)$.
	\end{proof}

 In case of subdiffusion equation ($\rho\in (0,1)$) we have
	\begin{lem}\label{lemmaSub}Let $\rho\in (0,1)$, $g(t)\in C^1[0, T]$ and $g(0)\neq 0$. Then for sufficiently small $t_0$ there are constants $C_0,C_1>0$, such that for all $k\in K_\rho$:
		\[
		\frac{C_0}{\lambda_k}\leq |b_{k,\rho}(t_0)|\leq\frac{C_1}{\lambda_k}.
		\]
	\end{lem}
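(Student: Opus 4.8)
The plan is to mimic the structure of the proof of Lemma \ref{lemmaClassic}, but now working with the Mittag-Leffler kernel instead of the exponential. Starting from
\[
b_{k,\rho}(t_0)=\int\limits_0^{t_0} \eta^{\rho-1} E_{\rho,\rho}(-\lambda_k\eta^\rho) g(t_0-\eta)\, d\eta,
\]
I would split $g(t_0-\eta)=g(t_0)+[g(t_0-\eta)-g(t_0)]$. The first part integrates explicitly: by Lemma \ref{MLint} with $\mu=1$,
\[
\int\limits_0^{t_0}\eta^{\rho-1}E_{\rho,\rho}(-\lambda_k\eta^\rho)\,d\eta = t_0^\rho E_{\rho,\rho+1}(-\lambda_k t_0^\rho)=\frac{1}{\lambda_k}\big(1-E_\rho(-\lambda_k t_0^\rho)\big),
\]
using the identity $E_{\rho,\rho+1}(-t)=t^{-1}(1-E_\rho(-t))$ noted in the proof of Lemma \ref{invvv1}. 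So the main term is $\frac{g(t_0)}{\lambda_k}\big(1-E_\rho(-\lambda_k t_0^\rho)\big)$, which by Lemma \ref{MLmonoton} behaves like $g(t_0)/\lambda_k$ up to bounded factors — but note $g(t_0)$ need not be nonzero here; the hypothesis is $g(0)\neq 0$. Here is where the restriction to small $t_0$ enters: since $g$ is continuous and $g(0)\neq0$, there is $\delta>0$ with $|g(t)|\geq \tfrac12|g(0)|$ for $t\in[0,\delta]$, and I take $t_0\leq\delta$ so that $|g(t_0)|\geq\tfrac12|g(0)|>0$.

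For the remainder term, write $g(t_0-\eta)-g(t_0)=-\int_{t_0-\eta}^{t_0}g'(s)\,ds$, so $|g(t_0-\eta)-g(t_0)|\leq \eta\,\|g'\|_{C[0,T]}$. Then
\[
\Bigl|\int\limits_0^{t_0}\eta^{\rho-1}E_{\rho,\rho}(-\lambda_k\eta^\rho)\bigl(g(t_0-\eta)-g(t_0)\bigr)\,d\eta\Bigr|
\leq \|g'\|_{C}\int\limits_0^{t_0}\eta^{\rho}E_{\rho,\rho}(-\lambda_k\eta^\rho)\,d\eta,
\]
and the last integral is, again by Lemma \ref{MLint} (now with $\mu=2$), equal to $t_0^{\rho+1}E_{\rho,\rho+2}(-\lambda_k t_0^\rho)$, which by Lemma \ref{mll4} is $O(1/\lambda_k)$ — more precisely bounded by $C t_0^{\rho+1}/(1+\lambda_k t_0^\rho)\leq C t_0\,\lambda_k^{-1}$. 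Thus the remainder is at most $C\,t_0\,\|g'\|_C\,\lambda_k^{-1}$. Combining,
\[
|b_{k,\rho}(t_0)|\geq \frac{|g(t_0)|}{\lambda_k}\bigl(1-E_\rho(-\lambda_k t_0^\rho)\bigr) - \frac{C\,t_0\,\|g'\|_C}{\lambda_k}.
\]

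The main obstacle is making the lower bound genuinely uniform in $k\in K_\rho$: the factor $1-E_\rho(-\lambda_k t_0^\rho)$ is small when $\lambda_k t_0^\rho$ is small, i.e. for the lowest eigenvalues, so one cannot simply absorb the remainder by shrinking $t_0$ for all $k$ at once. I would handle this in two regimes. For $\lambda_k\geq\Lambda_0$ (a fixed threshold), $1-E_\rho(-\lambda_k t_0^\rho)\geq c_0>0$ is bounded below, and then choosing $t_0$ small enough that $C\,t_0\,\|g'\|_C\leq \tfrac12 c_0\,|g(t_0)|\leq\tfrac12 c_0|g(0)|$ yields $|b_{k,\rho}(t_0)|\geq \tfrac12 c_0|g(0)|\,\lambda_k^{-1}$. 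For the finitely many $k$ with $\lambda_k<\Lambda_0$, any such $k$ that lies in $K_\rho$ has $b_{k,\rho}(t_0)\neq0$ by definition of $K_\rho$, so $|b_{k,\rho}(t_0)|\geq c'>0\geq c'\Lambda_0^{-1}\lambda_k^{-1}$ for a suitable constant; taking the minimum of the two lower bounds gives the constant $C_0$. The upper estimate is immediate: $|b_{k,\rho}(t_0)|\leq \|g\|_C\int_0^{t_0}\eta^{\rho-1}E_{\rho,\rho}(-\lambda_k\eta^\rho)\,d\eta = \|g\|_C\,\lambda_k^{-1}(1-E_\rho(-\lambda_k t_0^\rho))\leq \|g\|_C\,\lambda_k^{-1}=:C_1\lambda_k^{-1}$ by Lemma \ref{MLmonoton}. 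One subtlety I would double-check is that $t_0$ can be chosen independently of $k$ in the first regime — it can, since $\Lambda_0$, $c_0$, $C$ and $\|g'\|_C$ are all fixed before $t_0$ is selected — and that the threshold $\Lambda_0$ can be picked using only Lemma \ref{MLmonoton} (monotonicity of $E_\rho(-t)$) together with $t_0$ being, say, $\leq 1$.
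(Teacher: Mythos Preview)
Your approach is essentially the paper's, with a cosmetic difference in how the main term is extracted: the paper integrates by parts, writing $b_{k,\rho}(t_0)=\int_0^{t_0} g(t_0-s)\,d\big[s^\rho E_{\rho,\rho+1}(-\lambda_k s^\rho)\big]$, so the boundary term yields $g(0)\,t_0^\rho E_{\rho,\rho+1}(-\lambda_k t_0^\rho)$ directly (matching the hypothesis $g(0)\neq 0$), and the remainder is $g'(\xi_k)\,t_0^{\rho+1}E_{\rho,\rho+2}(-\lambda_k t_0^\rho)$ after the mean value theorem. You instead split at $g(t_0)$ and then invoke continuity to get back to $g(0)$; this is equivalent in spirit and yields the same $O(t_0/\lambda_k)$ remainder. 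Both arguments then conclude by taking $t_0$ small and handling the finitely many low modes via the definition of $K_\rho$.

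Two slips to tidy up. First, Lemma~\ref{MLint} with $\mu=2$ gives $\int_0^{t_0}(t_0-\eta)\,\eta^{\rho-1}E_{\rho,\rho}(-\lambda_k\eta^\rho)\,d\eta = t_0^{\rho+1}E_{\rho,\rho+2}(-\lambda_k t_0^\rho)$, not your integral $\int_0^{t_0}\eta^\rho E_{\rho,\rho}(-\lambda_k\eta^\rho)\,d\eta$; however, the bound you actually need follows trivially from $\eta\le t_0$ and the $\mu=1$ case, so just replace the equality by $\le t_0\int_0^{t_0}\eta^{\rho-1}E_{\rho,\rho}(-\lambda_k\eta^\rho)\,d\eta\le t_0/\lambda_k$. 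Second, your quantifier order is circular: you fix $\Lambda_0$ and $c_0=1-E_\rho(-\Lambda_0 t_0^\rho)$ ``before $t_0$'', yet $c_0\to 0$ as $t_0\to 0$ for fixed $\Lambda_0$. The fix is to choose $t_0$ first (small enough that $|g(t_0)|\ge\tfrac12|g(0)|$ and $Ct_0\|g'\|_C\le\tfrac14|g(0)|\,(1-E_\rho(-1))$), then set $\Lambda_0=t_0^{-\rho}$ so that $c_0=1-E_\rho(-1)$ is an absolute constant; the remaining finitely many $k$ with $\lambda_k<\Lambda_0$ are handled exactly as you describe.
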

 \begin{proof}
	Let $\rho\in (0,1)$. Using equality (\ref{MLintFormula}) we integrate by parts, then apply the mean value theorem. Then we have
		\[	
		b_{k,\rho}(t_0)=\int\limits_0^{t_0}g(t_0-s) s^{\rho-1} E_{\rho, \rho} (-\lambda_k  s^\rho )  ds=\int\limits_0^{t_0}g(t_0-s) d\big[ s^{\rho} E_{\rho, \rho+1} (-\lambda_k  s^\rho ) \big] =
		\]
		\[
		=g(t_0-s)  s^{\rho} E_{\rho, \rho+1} (-\lambda_k  s^\rho )\bigg|^{t_0}_0+\int\limits_0^{t_0}g'(t_0-s)  s^{\rho} E_{\rho, \rho+1} (-\lambda_k  s^\rho )ds=
		\]
		\[
		=g(0)\,  t_0^{\rho} \,E_{\rho, \rho+1} (-\lambda_k  t_0^\rho)+ g'(\xi_k) \int\limits_0^{t_0} s^{\rho} E_{\rho, \rho+1} (-\lambda_k  s^\rho )ds, \quad \xi_k\in [0, t_0].
		\]
		For the last integral formula (\ref{MLintFormula}) implies
		\[
		\int\limits_0^{t_0} s^{\rho} E_{\rho, \rho+1} (-\lambda_k  s^\rho )ds=t_0^{\rho+1} E_{\rho, \rho+2}(-\lambda_k t_0^\rho).
		\]
		Apply the asymptotic estimate of the Mittag-Leffler functions (Lemma \ref{ml8}) to get
\[
b_{k,\rho}(t_0)=\frac{g(0)}{\lambda_k} +\frac{g'(\xi_k)}{\lambda_k} t_o + O\bigg(\frac{1}{(\lambda_k t_0^\rho)^2}\bigg).
\]
If  $g(0)\neq 0$, then for sufficiently small $t_0$ we obtain the required lower bound. This also implies the required upper bound.
\end{proof}	

Theorem \ref{thmNotChange} proves the existence and uniqueness of a solution to the inverse problem (\ref{prob1})-(\ref{ad}) under condition $g(t)\in C[0,T]$ and $g(t)\neq 0$, $t\in [0,T]$, i.e., $g(t)$ does not change sign. In Example 1, we saw that if this condition is violated, then the uniqueness of the solution to problem (\ref{prob1})-(\ref{ad}) is violated. Naturally, questions arise: if $g(t)$ changes sign, is uniqueness always violated? What can be said about the existence of a solution? How many solutions can there be?

	It should be emphasized that the answers to these questions were not known even for the classical diffusion equation (i.e. $\rho=1$).

	Lemmas \ref{lemmaClassic} and \ref{lemmaSub} proved above allow us to answer these questions. Let us formulate the corresponding result.

 \begin{thm}Let $g(t)\in C^1[0,T]$,  function $\varphi(x)$ satisfy condition  (\ref{ml3}) and $\psi(x)$ satisfy condition (\ref{ml4}). Further, we will assume that for $\rho=1$ the conditions of Lemma \ref{lemmaClassic} are satisfied, and for $\rho\in (0,1)$, the conditions of Lemma \ref{lemmaSub} are satisfied.
		
		1) If set $K_{0,\rho}$ is empty, i.e. $b_{k,\rho}(t_0)\neq 0$, for all $k$, then there exists a unique solution of the inverse problem (\ref{prob1})-(\ref{ad}):
		\begin{equation}\label{K0empty_f}
			f(x)=\sum\limits_{k=1}^\infty \frac{1}{b_{k,\rho}(t_0)}\left[ \psi_k-\varphi_k E_\rho(-\lambda_k  t_0)\right]v_k(x),
		\end{equation}
		\begin{equation}\label{K0empty_u}
			u(x,t)=\sum\limits_{k=1}^\infty\varphi_k E_{\rho}(-\lambda_kt^\rho)v_k(x)+\sum\limits_{k=1}^\infty \frac{b_{k,\rho}(t)}{b_{k,\rho}(t_0)}\left[ \psi_k-\varphi_k E_\rho(-\lambda_k  t_0)\right]v_k(x).
		\end{equation}
		
		2) If set $K_{0,\rho}$ is not empty, then for the existence of a solution to the inverse problem, it is necessary and  sufficient that the following  conditions
		\begin{equation}\label{ortogonal}
			\psi_k=\varphi_k  E_\rho(-\lambda_k t_0),\,\, k\in K_{0,\rho},
		\end{equation}
		be satisfied. In this case, the solution to the problem (\ref{prob1})-(\ref{ad}) exists, but is not unique:
		
		\begin{equation}\label{K0notempty_f}
			f(x)=\sum\limits_{k\in K_\rho} \frac{1}{b_{k,\rho}(t_0)}\left[ \psi_k-\varphi_k E_\rho(-\lambda_k  t_0)\right]v_k(x)+\sum\limits_{k \in K_{0,\rho}} f_k v_k(x),
		\end{equation}
		\begin{equation}\label{K0notempty_u}
			u(x,t)=\sum\limits_{k=1}^\infty\big[\varphi_k E_{\rho}(-\lambda_kt^\rho)+f_k\big]v_k(x),
		\end{equation}
		where $f_k$, $k\in K_{0,\rho}$, are arbitrary real numbers, such that
		\begin{equation}\label{adfk}
			\sum\limits_{k \in K_{0,\rho}} \lambda_k^{2\sigma}|f_k|^2<\infty,\,\, \sigma>\frac{N}{4}.
		\end{equation}
  \end{thm}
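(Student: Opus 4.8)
The plan is to reduce the statement to the scalar relation (\ref{EqFor_fk2}), $f_kb_{k,\rho}(t_0)=\psi_k-\varphi_kE_\rho(-\lambda_kt_0)$, and then to run the argument of Theorem \ref{thmNotChange} with the two-sided bounds for $b_{k,\rho}(t_0)$ now supplied by Lemma \ref{lemmaClassic} (when $\rho=1$) or Lemma \ref{lemmaSub} (when $\rho\in(0,1)$): namely $C_0\lambda_k^{-1}\le|b_{k,\rho}(t_0)|\le C_1\lambda_k^{-1}$ for every $k\in K_\rho$. Combined with Lemma \ref{mll4}, which gives $|E_\rho(-\lambda_kt_0)|\le C\lambda_k^{-1}$, this yields $|E_\rho(-\lambda_kt_0)/b_{k,\rho}(t_0)|\le C$ uniformly on $K_\rho$, and these are the only quantitative inputs needed below.

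Part 1) is almost nothing new: when $K_{0,\rho}=\emptyset$ one has $K_\rho=\mathbb{N}$, so the bounds above hold for all $k$, and the proof is that of Theorem \ref{thmNotChange} verbatim, with Lemma \ref{invvv1} replaced by Lemma \ref{lemmaClassic} (resp. \ref{lemmaSub}). Concretely: define $f_k$ by (\ref{inv4}); for the partial sums $F_j$ of the series (\ref{K0empty_f}), Lemma \ref{ml1} with $\sigma>\frac{N}{4}$ gives $\|\hat{A}^{-\sigma}F_j\|^2_{C(\Omega)}\le C\sum_{k\le j}\lambda_k^{2\sigma}|\psi_k|^2/|b_{k,\rho}(t_0)|^2+C\sum_{k\le j}\lambda_k^{2\sigma}|E_\rho(-\lambda_kt_0)/b_{k,\rho}(t_0)|^2|\varphi_k|^2\le C\sum_k\lambda_k^{\tau+2}|\psi_k|^2+C\sum_k\lambda_k^{\tau}|\varphi_k|^2$ with $\tau=2\sigma>\frac{N}{2}$, which is finite by condition (\ref{ml4}) on $\psi$ and condition (\ref{ml3}) on $\varphi$; hence $f\in C(\overline{\Omega})$. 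That the $u$ of (\ref{K0empty_u}) is a classical solution of (\ref{prob1})--(\ref{ad}) is obtained by repeating the series estimates from the proofs of Theorems \ref{main1} and \ref{mainForward}, which need only $\sum_k\lambda_k^{2\sigma}|f_k|^2<\infty$, condition (\ref{ml3}) on $\varphi$ and $g\in C[0,T]$. Uniqueness is the Cauchy-problem argument from the end of the proof of Theorem \ref{thmNotChange}: for the difference of two solutions, projecting onto $v_k$ (legitimate because $\Delta u$ and $D_t^\rho u$ are continuous up to $\partial\Omega$, so $(\Delta u,v_k)=(u,\Delta v_k)$) gives $D_t^\rho u_k+\lambda_ku_k=f_kg(t)$, $u_k(0)=0$, hence $u_k(t)=f_kb_{k,\rho}(t)$, and $u_k(t_0)=0$ forces $f_k=0$ since $b_{k,\rho}(t_0)\neq0$.

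For part 2), $K_{0,\rho}\neq\emptyset$. Necessity of (\ref{ortogonal}): for any solution, the same projection onto $v_k$ gives $u_k(t)=\varphi_kE_\rho(-\lambda_kt^\rho)+f_kb_{k,\rho}(t)$, whence $\psi_k=u_k(t_0)=\varphi_kE_\rho(-\lambda_kt_0)+f_kb_{k,\rho}(t_0)$, and for $k\in K_{0,\rho}$ the last term vanishes, giving $\psi_k=\varphi_kE_\rho(-\lambda_kt_0)$. Sufficiency and the description of all solutions: assume (\ref{ortogonal}); for $k\in K_\rho$ put $f_k$ as in (\ref{inv4}) (meaningful since $b_{k,\rho}(t_0)\neq0$ there), and for $k\in K_{0,\rho}$ let $f_k\in\mathbb{R}$ be arbitrary, subject only to (\ref{adfk}); with these coefficients $f$ is given by (\ref{K0notempty_f}). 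Then $\sum_k\lambda_k^{2\sigma}|f_k|^2<\infty$ with $\sigma>\frac{N}{4}$ — the sum over $K_\rho$ being estimated exactly as in part 1) and the sum over $K_{0,\rho}$ being finite by (\ref{adfk}) — so $f\in C(\overline{\Omega})$ by Lemma \ref{ml1}. Taking $u$ to be the solution of the forward problem (\ref{prob1}) with data $\varphi$ and source $f(x)g(t)$, so that $u_k(t)=\varphi_kE_\rho(-\lambda_kt^\rho)+f_kb_{k,\rho}(t)$ (cf. (\ref{K0notempty_u})), the convergence of the series for $u$, $\Delta u$ and $D_t^\rho u$ in $C(\overline{\Omega}\times(0,T])$ and the validity of the equation follow again from the estimates in the proofs of Theorems \ref{main1} and \ref{mainForward}. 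Finally $u(x,t_0)=\psi(x)$: for $k\in K_\rho$ this is relation (\ref{inv4}), and for $k\in K_{0,\rho}$ it is $u_k(t_0)=\varphi_kE_\rho(-\lambda_kt_0)=\psi_k$, which holds by (\ref{ortogonal}). Non-uniqueness follows because the $f_k$, $k\in K_{0,\rho}$, are free: for any $k_0\in K_{0,\rho}$ and $c\in\mathbb{R}$, replacing $f$ by $f+c\,v_{k_0}(x)$ and $u$ by $u+c\,b_{k_0,\rho}(t)v_{k_0}(x)$ produces another solution, in accordance with Example 1.

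The hard part of this theorem — the two-sided control of $b_{k,\rho}(t_0)$ on $K_\rho$ when $g$ changes sign — has already been carried out in Lemmas \ref{lemmaClassic} and \ref{lemmaSub}, so the remainder is bookkeeping. Two points deserve care. First, in part 2) the $f$ constructed above only satisfies $\sum_k\lambda_k^{2\sigma}|f_k|^2<\infty$, not necessarily the full condition (\ref{ml3}) on $f$; hence one cannot simply cite Theorem \ref{mainForward} for the forward problem and must instead re-run the series estimates from the proofs of Theorems \ref{main1}--\ref{mainForward}, which do go through under this weaker bound. Second, the termwise projection onto $v_k$ — used both for necessity in part 2) and for uniqueness in part 1) — is justified only because $\Delta u$ and $D_t^\rho u$ are continuous up to $\partial\Omega$ (Definition \ref{def}), which is what allows the integration by parts $(\Delta u,v_k)=(u,\Delta v_k)$.
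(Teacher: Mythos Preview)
Your proposal is correct and follows essentially the same approach as the paper: reduce Part~1) to the proof of Theorem~\ref{thmNotChange} with Lemma~\ref{invvv1} replaced by Lemmas~\ref{lemmaClassic}/\ref{lemmaSub}, and for Part~2) read off necessity and sufficiency of (\ref{ortogonal}) directly from the scalar equation~(\ref{EqFor_fk2}), letting the $f_k$ for $k\in K_{0,\rho}$ be arbitrary subject to~(\ref{adfk}). Your write-up is in fact more careful than the paper's own short proof --- in particular your remark that the constructed $f$ satisfies only $\sum_k\lambda_k^{2\sigma}|f_k|^2<\infty$ (so one must re-run the series estimates rather than quote Theorem~\ref{mainForward} verbatim), and your explicit justification of the projection $(\Delta u,v_k)=(u,\Delta v_k)$ via continuity up to the boundary, are points the paper passes over silently.
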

  \begin{proof}
      The proof of the first part of the theorem is completely analogous to the proof of Theorem \ref{thmNotChange}. As regards the proof of the second part of the theorem, we note the following.

If $k\in K_\rho$, then again from (\ref{EqFor_fk2}) we have (\ref{inv4}) and (\ref{inv5}). 

If $k\in K_{0,\rho}$, i.e. $b_{k,\rho}(t_0)=0$, then the solution of equation (\ref{EqFor_fk2}) with respect to $f_k$ exists if and only if the conditions (\ref{ortogonal}) are satisfied. In this case, the solution of the equation can be arbitrary numbers $f_k$, and in order for the functions (\ref{K0notempty_f}) and (\ref{K0notempty_u}) to have the necessary continuous derivatives, these numbers must, as it was in 1), satisfy condition (\ref{adfk}).
 \end{proof}
 
 Note that condition (\ref{ortogonal}) is rather difficult to verify. Given relation $E_\rho(-t)\neq 0$, $t>0$ (see Lemma \ref{MLmonoton}), one can replace this condition with a simpler condition.

\begin{remark}For conditions (\ref{ortogonal}) to be satisfied, it suffices that the following orthogonality conditions hold: 
\[
\varphi_k=(\varphi, v_k)=0,\,\, \psi_k=(\psi, v_k)=0,\,\, k\in K_{0,\rho}.
\]
\end{remark}

Let us briefly note some known results on inverse problems for the diffusion equation (i.e., $\rho =1$). In the work of D.G. Orlovskii \cite{Orl} abstract diffusion equations in Banach and Hilbert spaces are considered.  In the case of a Hilbert space, the elliptic part of the equation is self-adjoint, and the found uniqueness criterion is similar to (\ref{criterion}). A condition on the function $b_{k,1}(T)$ is found, which ensures the existence of a generalized solution (note that here condition (\ref{criterion}) is given at the point $t_0=T$).

In I.V. Tikhonov, Yu.S. Eidel'man \cite{Tix}, abstract diffusion equations in Banach and Hilbert spaces are also considered. In the case of a self-adjoint elliptic part, the uniqueness criterion coincides with (\ref{criterion}). It is shown that if we consider equations in a Banach space, then condition (\ref{criterion}) is not a criterion, and an addition to (\ref{criterion}) is found that turns (\ref{criterion}) into a uniqueness criterion for equations with a non-conjugate elliptic part.

The elliptic part of the diffusion equation in work A.I. Prilepko, A.B. Kostin \cite{Pr} is a second-order differential expression. Both non-self-adjoint and self-adjoint elliptic parts are considered. In this paper, $g(t)$ also depends on the spatial variable: $g(t):= g(x,t)$. In the case of a self-adjoint elliptic part, the authors managed to find a criterion for the uniqueness of the generalized solution of the inverse problem: the solution is unique if and only if the system
\[
w_k(x)=v_k(x)\int\limits_0^{t_0} g(x,t) e^{-\lambda_k(t_0-t)} dt,\quad k=1,2,\cdots
\]
is complete in $L_2(\Omega)$. It is easy to see that if $g(x,t)$ does not depend on $x$, then this criterion coincides with (\ref{criterion}). It should be emphasized that the Fourier method is not applicable to the equation considered in this paper.

The closest to our research are the works of K.B. Sabitov and A.R. Zaynullov \cite{Sab} and \cite{Sab2}. We borrowed some ideas from these works. In work \cite{Sab} the elliptic part of the equation is $u_{xx}$ defined on an interval (in \cite{Sab2} the Laplace operator on the rectangle). Having considered the over-determination condition in the form (\ref{ad}), it is shown that the criterion for the uniqueness of the classical solution is (\ref{criterion}). When condition (\ref{criterion}) is satisfied, a classical solution is constructed by the Fourier method. We note that the existence of a classical solution was not discussed in the works listed above.

\section{Conclusion}

In this paper, we consider the subdiffusion equation with a fractional derivative of order $\rho\in (0,1]$, and take the Laplass operator as the elliptic part. The right-hand side of the equation has the form $f(x) g(t)$, where $g(t)$ is a given function and the inverse problem of determining function $f(x)$ is considered. Following the works \cite{Pr} and \cite{Sab}, the over-determination condition is taken in a more general form. It is proved that the criterion for the uniqueness of the classical solution of the inverse problem for the subdiffusion equations coincides with the analogous condition for the diffusion equations.

In the case when this condition is not satisfied, a necessary and sufficient condition for the existence of a classical solution is found and all solutions of the inverse problem are constructed using the classical Fourier method. We emphasize that this part of the work is also new for the classical diffusion equation.

The results of this work can be generalized to more general subdiffusion equations by replacing the Laplace operator in (\ref{prob1}) with a high-order self-adjoint elliptic operator with variable coefficients. At the same time, instead of the result of V.A. Il'in should be used with similar results of Sh.A. Alimov (see e.g. \cite{Al12}) for a general elliptic operator.

\section{Acknowledgement}
The authors are grateful to Sh. A. Alimov for discussions of
these results.
The authors acknowledge financial support from the  Ministry of Innovative Development of the Republic of Uzbekistan, Grant No F-FA-2021-424.


\begin{thebibliography}{99}
 \normalsize


\bibitem{Pskhu}
        {\sc A.~V.~Pskhu},
       {\it Fractional Differential Equations}, Moscow: Nauka, 2005, [in Russian].
\bibitem{Pr}
       {\sc A.~I.~Prilepko, A.~B.~Kostin},
      {\it On certain inverse problems for
      parabolic equations with final and integral observation}, Mat. Sb., 183, 4,  49-68, 1992.

\bibitem{Sab}
       {\sc K.~B.~Sabitov, A.~R.~Zaynullov}, {\it On the theory of the known inverse problems for the heat transfer equation}, Series Physical and Mathematical Sciences, 161, 2, 274-291, 2019.

\bibitem{Sab2}
        {\sc K.~B.~Sabitov, A.~R.~Zaynullov},
        {\it Inverse problems for a two-dimensional heat equation with unknown right-hand side}, Russian Mathematics (Izvestiya VUZ. Matematika), 3, 75-88, 2021.

\bibitem{Orl}
      {\sc D.~G.~Orlovskii},
      {\it On a problem of determining the parameter of an evolution equation}, Differ. Uravn., 26, 9,  1614-1621, 1990.

\bibitem{Tix}
      {\sc I.~V.~Tikhonov, Yu.~S.~\'Eidel'man},
      {\it Uniqueness criterion in an inverse problem for an abstract differential equation with nonstationary inhomogeneous term}, Mat. Notes, 77, 2, 273-290, 2005.

\bibitem{MS}
      {\sc M.~Slodichka},
     {\it Uniqueness for an inverse source problem of determining a space-dependent source in a non-autonomous time-fractional diffusion equation}, Frac.Calculus and  Appl. Anal., 23, 6, 1703-1711, 2020,  DOI: 10.1515/fca-2020-0084.

\bibitem{MS1}
     {\sc M.~Slodichka, K.~Sishskova, V.~Bockstal},
     {\it Uniqueness for an inverse source problem of determining a space dependent source in a time-fractional diffusion equation}, Appl. Math. Letters, 91, 15-21, 2019.

\bibitem{Lad}
     {\sc O.~A.~Ladyzhenskaya},
     {\it Mixed problem for a hyperbolic equation}, Gostexizdat, 1953.

\bibitem{Hand1}
    {\sc Y.~Liu, Z.~Li, M.~Yamamoto},
    {\it Inverse problems of determining sources of the fractional partial differential equations}, Handbook of Fractional Calculus with Applications, 2, J.~A.~T.~Marchado Ed.~De Gruyter, 411-430, 2019.

\bibitem{Ash1}
    {\sc R.~Ashurov, M.~Shakarova},
    {\it Time-dependent source identification problem for fractional Schr\"odinger type equations}, Lobachevskii Journal of Mathematics, 42, 3, 517-525, 2022.

\bibitem{Ash2}
    { \sc R.~Ashurov, M.~Shakarova},
    {\it Time-dependent source identification problem for a fractional Schr\"odinger equation with the Riemann-Liouville derivative}, {doi : 2205.03407}, 2022.

\bibitem {Yama11}
    {\sc K.~Sakamoto and M.~Yamamoto},
    {\it Initial value boundary value problems for fractional diffusion-wave equations and applications to some inverse problems}, J. Math. Anal. Appl., 382, 426-447, 2011.

\bibitem{Fur}
   {\sc K.~M.~Furati, O.~S.~Iyiola, M.~Kirane},
   {\it An inverse problem for a generalized fractional diffusion}, Applied Mathematics and Computation, 249, 24-31, 2014.

\bibitem{15}
   {\sc M.~Kirane, A.~S.~Malik},
   {\it Determination of an unknown source term and the temperature distribution for the linear heat equation involving fractional derivative in time}, Applied Mathematics and Computation, 218, 163-170, 2011.

\bibitem{16}
   {\sc M.~Kirane, B.~Samet, B.~T.~Torebek},
   {\it Determination of an unknown source term and the temperature distribution for the subdiffusion equation at the initial and final data}, Electronic Journal of Differential Equations, 217, 1-13, 2017.

\bibitem{20}
   {\sc Z.~Li, Y.~Liu, M.~Yamamoto},
   {\it Initial-boundary value problem for multi-term time-fractional diffusion equation with positive constant coefficients}, Applied Mathematica and Computation, 257, 381-397, 2015.

\bibitem{24}
   {\sc S.~A.~Malik, S.~Aziz},
   {\it An inverse source problem for a two parameter anomalous diffusion equation with nonlocal boundary conditions}, Computers and Mathematics with applications, 3, 7-19, 2017.

\bibitem{25}
  {\sc M.~Ruzhansky, N.~Tokmagambetov, B.~T.~Torebek},
   {\it Inverse source problems for positive operators}, I: Hypoelliptic diffusion and subdiffusion equations, J. Inverse Ill-Possed Probl, 27, 891-911, 2019.

\bibitem{4} {\sc R.~Ashurov, A.~Mukhiddinova},
{\it Inverse Problem of determining the heat source density for the subdiffusion equation},
Differential Equations, 56, 12, 1550-1563, 2020.

\bibitem{AshF}
   {\sc R.~R.~Ashurov, Yu.~E.~Fayziev},
   {\it Determination of fractional order and source term in a fractional subdiffusion equation}, Eurasian Mathematical Journal, 13, 1, 19-31, 2022.

\bibitem{Sab3}
   {\sc K.~B.~Sabitov},
   {\it Direct and inverse problems for of mixed type parabolic-hyperbolic equations}, Moscow: Nauka, 2016, [in Russian].

\bibitem{Kab1}
   {\sc S.~I.~Kabanikhin},
   {\it Inverse and Ill-Posed Problems. Theory and Applications}, De Gruyter, 2011.

\bibitem{FN}
   {\sc V.~E.~Fedorov, A.~V.~Nagumanova},
   {\it Inverse problem for evolutionary equation with the Gerasimov-Caputo fractional derivative in the sectorial case},The Bulletin of Irkutsk State University, Series Mathematics, 28, 123-137, 2019.

\bibitem{S}
   {\sc S.~Liu, F.~Sun, L.~Feng},
   {\it Regularization of inverse source problem for fractional diffusion equation with Riemann-Liouville derivative}, Computational and Applied Mathematics, 2021.

\bibitem{Niu}
  { \sc P.~Niu, T.~Helin, Z.~Zhang},
  {\it An inverse random source problem in a stochastic fractional diffusion equation, Inverse Problems}, 36, 4, Art. 045002, 2020.

\bibitem{AshM}
   {\sc R.~Ashurov, A.~Mukhiddinova},
   {\it Initial-boundary value problem for a time-fractional subdiffusion equation with an arbitrary elliptic differential operator}, , 42, 3, 517-525, 2021.


\bibitem{K}
    {\sc M.~A.~Krasnoselskii, P.~P~Zabreyko, E.~I.~Pustilnik, P.~E.~Sobolevski},
    {\it Integral operators in the spaces of integrable functions}, M. Nauka, 1966, [in Russian].

\bibitem{Il}
    {\sc V.~A.~Il'in},
    {\it On the solvability of mixed problems for hyperbolic and parabolic equations}, Russian Math. Surveys, 15, 2, 97-154, 1960, [in Russian].

\bibitem{Dzh66}
    {\sc M.~M.~Dzherbashian [=Djrbashian]},
    {\it Integral Transforms and Representation of Functions in the Complex Domain,} M.Nauka, 1966, [in Russian].
\bibitem{Gor} R. Gorenflo, A.A. Kilbas, F. Mainardi, S.V. Rogozin. {\it Mittag-Leffler Functions, Related Topics     and Applications, Springer}.
     Berlin-Heidelberg, Germany, 2014. doi: 10.1007/978-3-662-61550-8.

\bibitem{AShZun} R. Ashurov, R. Zunnunov. {\it Intial-boundary value and inverse problems for subdiffusion equations in $R^N$ } // Fractional Differential Calc. { 10}:2, 291-306 (2020).

\bibitem{AshCab}
    {\sc R.~Ashurov, A.~Cabada and B.~Turmetov},
    {\it Operator method for construction of solutions of linear fractional differential equations with constant coefficients}, Fractional Calculus Appl. Anal., 1, 229-252, 2016.

\bibitem{Kac}
   {\sc S.~Kacmarz, H.~Steinhaus},
   {\it Theorie der Orthogonalreihen}, Warszawa - Lw\'ow, 1935.

\bibitem{Le}
    {\sc A.~F.~Leont'ev}, {\it Rows of exhibitors}, Moskow: Nauka, 1976, [in Russian].

\bibitem{Al12}
   {\sc Sh.~A.~Alimov, V.~A.~Il'in, E.~M.~Nikishin},
   {\it Convergence problems of multiple trigonometrical series and spectral decompositions, I}, Russian Mathematical Surveys, 31 , 6, 28-83, 1976.
\end{thebibliography}
\end{document}